\pgfplotsset{compat=1.18}
\DeclareSymbolFont{bbold}{U}{bbold}{m}{n}
\DeclareSymbolFontAlphabet{\mathbbold}{bbold}
\newcommand{\R}{\mathbb{R}}
\newcommand{\C}{\mathbb{C}}
\newcommand{\K}{\mathbb{K}}
\newcommand{\1}{\mathbbold{1}}
\newcommand{\from}{\colon}
\DeclareMathOperator{\lin}{span}
\DeclareMathOperator{\sech}{sech}
\newcommand{\calH}{\mathcal{H}}
\newcommand{\calG}{\mathcal{G}}
\newcommand{\calL}{\mathcal{L}}
\newcommand{\calK}{\mathcal{K}}
\newcommand\MyPairedDelimiter{%
  \@ifstar{\My@Paired@Delimiter{{}}}
          {\My@Paired@Delimiter{}}%
}
\newcommand\My@Paired@Delimiter[4]{%
  \newcommand#2{%
    \@ifstar{\start@PD{#1}{\delimitershortfall=-1sp}{#3}{#4}}
            {\start@PD{#1}{}{#3}{#4}}%
  }%
}
\newcommand\start@PD[5]{%
  #1\mathopen{\mathpalette\put@delim@helper{\put@delim{#2}{#3}{.}{#5}}}%
  #5%
  \mathclose{\mathpalette\put@delim@helper{\put@delim{#2}{.}{#4}{#5}}}%
}
\newcommand\put@delim@helper[2]{%
  \hbox{$\m@th\nulldelimiterspace=0pt #2#1$}%
}
\newcommand\put@delim[5]{%
  \setbox\z@\hbox{$\m@th#5{#4}$}%
  \setbox\tw@\null
  \ht\tw@\ht\z@ \dp\tw@\dp\z@
  #1#5%
  \left#2\box\tw@\right#3%
}
\MyPairedDelimiter*{\abs}{\lvert}{\rvert}
\MyPairedDelimiter*{\norm}{\lVert}{\rVert}
\MyPairedDelimiter{\set}{\{}{\}}
\providecommand{\scpr}[2]{\left( #1 \,\middle|\, #2 \right)}
\providecommand{\dupa}[2]{\left\langle #1 \mkern1.5mu,\mkern1.5mu #2 \right\rangle}
\renewcommand{\sp}{\scpr}
\newcommand\llim{
\mathchoice{\vcenter{\hbox{${\scriptstyle{-}}$}}}
{\vcenter{\hbox{$\scriptstyle{-}$}}}
{\vcenter{\hbox{$\scriptscriptstyle{-}$}}}
{\vcenter{\hbox{$\scriptscriptstyle{-}$}}}}
\newcommand\rlim{
\mathchoice{\vcenter{\hbox{${\scriptstyle{+}}$}}}
{\vcenter{\hbox{$\scriptstyle{+}$}}}
{\vcenter{\hbox{$\scriptscriptstyle{+}$}}}
{\vcenter{\hbox{$\scriptscriptstyle{+}$}}}}
\theoremstyle{plain} 
\newtheorem{theorem}{Theorem}[section]
\newtheorem{corollary}[theorem]{Corollary}
\newtheorem{lemma}[theorem]{Lemma}
\newtheorem{proposition}[theorem]{Proposition}
\theoremstyle{definition}
\newtheorem{definition}[theorem]{Definition}
\title{On solitary waves for the Korteweg--de Vries equation on metric star graphs}
\author{Delio Mugnolo \orcidlink{0000-0001-9405-0874}}
\address[D.M.]{
FernUniversit\"at in Hagen, Lehrgebiet Analysis, 58084 Hagen, Germany} 
\email{delio.mugnolo@fernuni-hagen.de}
\author{Diego Noja \orcidlink{0000-0003-1949-9369}}
\address[D.N.]{
Universit\`{a} degli Studi di Milano-Bicocca, Dipartimento di Matematica e Applicazioni, Via R.Cozzi 55, 20125, Milano, Italy} 
\email{diego.noja@unimib.it}
\author{Christian Seifert \orcidlink{0000-0001-9182-8687}}
\address[C.S.]{
Technische Universit\"at Hamburg, Institut f\"ur Mathematik, Am Schwarzenberg-Campus 3, 21073 Hamburg, Germany} \email{christian.seifert@tuhh.de}
\date{\today}
\begin{document}

\begin{abstract}
    We study the Korteweg--de Vries equation on a metric star graph and investigate existence of solitary waves on the metric graph in terms of the coefficients of the equation on each edge, the coupling condition at the central vertex of the star and the speeds of the travelling wave.
    We show that, with a continuity condition at the vertex, solitary waves can occur exactly when the parameters are chosen in a fairly special manner. We also consider coupling conditions beyond continuity.
\end{abstract}

\thanks{Declarations of interest: none.
Data Availability Statement: Data sharing not applicable to this article as no datasets were generated or analysed during the current study.\\
\medskip
This article is based upon work
from COST Action 18232 MAT-DYN-NET, supported by COST (European Cooperation in Science and
Technology), \url{www.cost.eu}
}
\keywords{Korteweg--de Vries equation, metric graphs, solitary waves}
\subjclass[2020]{35Q53, 47E05, 47Dxx}

\maketitle

\section{Introduction}
\label{sec:intro}

The Korteweg--de Vries equation is a spatially one-dimensional model to describe shallow water waves in a narrow channel. Its investigation started in 1834 with experiments by Russell and was formally derived by Korteweg and de Vries in 1895 \cite{KortewegDeVries1895}; however, Boussinesq already introduced the equation in the 1870's.

Let $u$ be the function describing the height displacement of the water surface from equilibrium height, i.e.\ $u(t,x)$ is the deviation from equilibrium at time $t$ and position $x$. Then the Korteweg--de Vries (KdV) equation in dimensionless variables is given by
\[ \partial_t u = -\partial_x^3 - 6u\partial_x u.\]
However, note that there are various versions of this equation with different coefficients as well as first order terms $\partial_xu$ on the right-hand side, depending on rescaling and translation in the derivation process (the rescaling and translation can be well performed if the spatial domain is the whole real line).

In 1965, Zabusky and Kruskal \cite{ZabuskyKruskal1965} numerically discovered that solutions of the KdV equation admit solitary waves.

The linear part of the KdV equation, which also appears as the small amplitude long-wave approximation $u\partial_xu \approx 0$ gives rise to the so-called Airy equation (again, possibly with a first order term $\partial_x u$ on the right-hand side). This was studied by Airy as well as by Stokes \cite{Stokes1847} in order to understand the solitary waves observed by Russell.

The spatial domain for the KdV equation is given by an interval (or a halfline or the real line). When modelling branching channels, one thus needs to work with a bunch of intervals modelling the single channels as well as suitable coupling conditions at the junctions. Such geometric objects are called metric graphs and differential equations on metric graphs have been studied extensively during the last decades, see e.g.\ \cite{BerkolaikoKuchment2013, Mugnolo2014, Kurasov2024} and references therein for an overview on the topic. The linear part of KdV, i.e.\ the Airy equation, has been studied on metric graphs recently \cite{SobirovAkhmedovKarpovaJabbarova2015,SobirovUeckerAkhmedov2015b, SobirovUeckerAkhmedov2015, DeconinckSheilsSmith2016,MugnoloNojaSeifert2018, Seifert2018}; in fact, making use of the theory developed in \cite{SchubertSeifertVoigtWaurick2015}, in \cite{MugnoloNojaSeifert2018, Seifert2018} a characterisation of all coupling conditions giving rise to unitary or contractive dynamics for the Airy equation on metric (star) graphs were given.

As for the KdV equation on metric graphs, there are some well-posedness results in \cite{Cavalcante2018, AnguloPavaCavalcante2024} as well as stability results \cite{CavalcanteMunoz2019,AnguloPavaCavalcante2021, CavalcanteMunoz2023}.

In this short note we consider the KdV equation on a metric star graph with semi-infinite edges, i.e.\ on a bunch of halflines coupled at a single vertex, which we call $0$. For simplicity, we work with finitely many edges. We will consider general coefficients on the edges and include first-order terms, i.e.\ we consider
\[\partial_t u_e = -\alpha_e \partial_x^3 u + \beta_e \partial_x u + \gamma_e u_e\partial_x u_e\]
for each edge $e$ (where each single edge corresponds to a halfline).
We investigate existence of solitary waves for this equation on the metric star graph. In order to fix the model, we have to specify the coupling condition at the junction $0$. Here, we will work with linear coupling conditions coming from the linear part of the KdV equation, i.e.\ the Airy equation. One particular class of couplings stems from continuity of $u$ along the junction, but we will also consider couplings giving rise to jumps of $u$ at the junction (those are inspired by branching channels where there may be a large gradient at the junction, which can be ideally modelled by a jump). As it turns out, existence of solitary waves on the metric star graph is a rather rare situation; the coefficients $\alpha_e, \beta_e, \gamma_e$ as well as the wave speeds $c_e$ have to satisfy strong compatibility conditions in order to allow for solitary waves.

Let us outline the remaining part of the paper. In Section \ref{sec:KdV_R} we review the KdV equation (with general coefficients) on halflines and the real line, also explaining the solitary wave solution for the equation. Then in Section \ref{sec:Airy} we move on the the linear part, the Airy equation, on metric star graphs and collect the necessary results for the coupling conditions at the junction. We consider the KdV equation on metric star graphs in Section \ref{sec:KdV_graphs} and characterize existence of solitary waves by means of compatibility conditions for the coefficients.

\section{Solitary waves for Korteweg--de Vries equation on halflines and the real line}
\label{sec:KdV_R}

The results stated in this section are essentially contained in \cite{KortewegDeVries1895}. However, for self-consistency, for the reader's convenience and to fix notation we include full statements and proofs.

Let $\Omega\in \set{(-\infty,0), (0,\infty), \R}$, $\alpha, \beta,\gamma\in\R$ with $\alpha>0$ and $\gamma\neq 0$. Then the \emph{Korteweg--de Vries (KdV) equation} (on $\Omega$) is given by
\begin{equation}
\label{eq:KdV}
  \partial_t u = -\alpha \partial_x^3 u + \beta \partial_x u + \gamma u \partial_x u \quad\text{on} \quad \R\times \Omega,
\end{equation}
where the standard choice of coefficients is $\alpha=1$, $\beta=0$, and $\gamma=-6$.

\begin{definition}
  We say that $u\from \R\times \Omega\to\R$ is a \emph{travelling wave} for the KdV equation \eqref{eq:KdV}
  provided there exist $\varphi\in C^3(\R)$ and $c\in \R\setminus\{0\}$ such that 
  $u\from \R\times \Omega\to\R$ defined by $u(t,x):= \varphi(x-ct)$ for all $t\in\R$ and $x\in \Omega$ solves \eqref{eq:KdV}.
  Moreover, if $\varphi$ admits at most one local extremum and the limits $\lim_{y\to\pm\infty} \varphi(y)$ exist then $u$ determined by $\varphi$ is called \emph{solitary wave}.
\end{definition}

We want to find $\varphi\from\R\to\R$ such that $u\from \R\times\Omega \to\R$ defined by 
\[u(t,x):=\varphi(x-ct) \quad(x\in \Omega, t\in \R),\]
where $c\in\R$, is a solitary wave.

Plugging the description of $u$ into the equation we obtain
\[-\alpha \varphi''' + (\beta+c)\varphi' + \gamma \varphi \varphi' = 0.\]
Integrating this equation, we get
\[-\alpha \varphi'' + (\beta+c)\varphi + \frac{\gamma}{2} \varphi^2 = A\]
for some $A\in\R$. The corresponding first order system reads, with $\psi:=\varphi'$,
\begin{align}
\label{eq:first_order_system}
\begin{split}
  \varphi' & = \psi, \\
  \psi' & = \frac{-A+(\beta+c)\varphi + \frac{\gamma}{2}\varphi^2}{\alpha}.
\end{split}
\end{align}

Define
\[H(\varphi,\psi) := \frac{\psi^2}{2} - \frac{1}{\alpha} \bigl(-A\varphi + \frac{\beta+c}{2} \varphi^2 + \frac{\gamma}{6}\varphi^3\bigr).\]
Then $H$ is constant along solutions of the first order system.

Let us compute the stationary points of the first order system, i.e.\
\begin{align*}
  \varphi' & = 0, \\
  \psi' & = 0.
\end{align*}
Thus, $\psi = 0$ and $-A+(\beta+c)\varphi + \frac{\gamma}{2} \varphi^2 = 0$. Since $\gamma\neq 0$, we obtain
\begin{equation}
\label{eq:quadratic_phi}
\varphi^2 + \frac{2(\beta+c)}{\gamma} \varphi - \frac{2A}{\gamma} = 0. 
\end{equation}

\begin{lemma}
  If $(\beta+c)^2+2A\gamma< 0$ then there is no stationary point and all trajectories of solutions of \eqref{eq:first_order_system} are unbounded.
\end{lemma}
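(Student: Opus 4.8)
The plan is to reduce both assertions to a single observation about the right-hand side of the second equation in \eqref{eq:first_order_system}. Write $f(\varphi) := -A + (\beta+c)\varphi + \frac{\gamma}{2}\varphi^2$, so that a stationary point is precisely a pair $(\varphi,0)$ with $f(\varphi)=0$. First I would compute the discriminant of the quadratic \eqref{eq:quadratic_phi}: up to the positive factor $4/\gamma^2$ it equals $(\beta+c)^2 + 2A\gamma$, which is negative by hypothesis. Hence $f$ has no real root, and the first claim, absence of stationary points, follows at once.

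The core of the argument is that the absence of roots makes $f$ sign-definite and, crucially, bounded away from zero. Completing the square gives $f(\varphi) = \frac{\gamma}{2}(\varphi - \varphi^*)^2 + f(\varphi^*)$ with $\varphi^* = -(\beta+c)/\gamma$ and $f(\varphi^*) = -\frac{(\beta+c)^2 + 2A\gamma}{2\gamma}$. Since the numerator is negative by assumption, $f(\varphi^*)$ has the sign of $\gamma$, and this extremal value is the global minimum of $f$ if $\gamma>0$ and its global maximum if $\gamma<0$. In either case I obtain $|f(\varphi)| \ge |f(\varphi^*)| > 0$ for all $\varphi\in\R$, with $f$ keeping the constant sign of $\gamma$ throughout; accordingly I set $\delta := |f(\varphi^*)|/\alpha > 0$, which is legitimate as $\alpha>0$.

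From here the unboundedness is nearly immediate. Along any solution of \eqref{eq:first_order_system} we have $\psi' = f(\varphi)/\alpha$, whence $|\psi'| \ge \delta > 0$ with $\psi'$ of constant sign; thus $\psi$ is strictly monotone with derivative bounded away from zero. On the maximal interval of existence this forces the trajectory to leave every bounded set: if the maximal interval is a halfline (or all of $\R$), then $|\psi(y)| \to \infty$ as $y$ runs to the corresponding endpoint, so already the second coordinate is unbounded; and if instead the solution ceases to exist in finite time, then, the vector field being polynomial and hence smooth, the standard extensibility theorem forces $|(\varphi(y),\psi(y))| \to \infty$ near that endpoint. Either way the trajectory is unbounded in the phase plane.

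I expect the only genuine subtlety to lie in this last step, namely disposing cleanly of the finite-time blow-up alternative rather than assuming global existence; this is handled by the usual maximal-solution argument for ODEs with locally Lipschitz right-hand side (a solution defined on a bounded maximal interval must escape every compact set as the endpoint is approached). Everything preceding it is elementary algebra on the quadratic $f$, and the conceptual content is entirely captured by the monotonicity of $\psi$.
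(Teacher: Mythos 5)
Your proof is correct and follows essentially the same route as the paper's: both rest on the observation that, in the absence of real roots of the quadratic, $\psi' = f(\varphi)/\alpha$ is sign-definite and bounded away from zero, so $\psi$ is monotone and unbounded along any trajectory. You are in fact slightly more careful than the paper, which reduces to $\gamma>0$ ``without loss of generality'' and tacitly assumes global existence, whereas you treat both signs of $\gamma$ directly and explicitly dispose of the finite-time blow-up alternative via the standard escape-to-infinity property of maximal solutions.
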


\begin{proof}
    Without loss of generality, let $\gamma>0$. Then the minimal value of $\varphi \mapsto \frac{-A+(\beta+c)\varphi + \frac{\gamma}{2}\varphi^2}{\alpha}$ is given for $\varphi = -\frac{\beta+c}{\gamma}$ with value
    \[\frac{-A-\frac{1}{2\gamma}(\beta+c)^2}{\alpha} = \frac{1}{2\alpha\gamma}\bigl(-2A\gamma-(\beta+c)^2\bigr).\]
    In view of \eqref{eq:first_order_system}, $\psi'$ is bounded from below by a positive constant, so $\psi$ will be unbounded and therefore also $\varphi$ will be unbounded.
\end{proof}

\begin{lemma}
  If $(\beta+c)^2+2A\gamma = 0$ then there is exactly one stationary point
  \[p = \Bigl(-\frac{\beta+c}{\gamma},0\Bigr).\]
\end{lemma}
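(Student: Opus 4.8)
The plan is to read the stationary points directly off the computation already carried out in the excerpt. A point $(\varphi,\psi)$ is stationary precisely when $\varphi'=0$ and $\psi'=0$; by the first line of \eqref{eq:first_order_system} this forces $\psi=0$, and by the second line it forces $\varphi$ to solve $-A+(\beta+c)\varphi+\frac{\gamma}{2}\varphi^2=0$, i.e.\ the quadratic \eqref{eq:quadratic_phi}. Since $\gamma\neq 0$ this equation is genuinely of degree two, so the whole question reduces to counting the real roots of \eqref{eq:quadratic_phi}.

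Next I would compute the discriminant of \eqref{eq:quadratic_phi}. Writing it in the monic form $\varphi^2+\frac{2(\beta+c)}{\gamma}\varphi-\frac{2A}{\gamma}=0$, the discriminant equals
\[
  \Bigl(\frac{2(\beta+c)}{\gamma}\Bigr)^2+\frac{8A}{\gamma}
  =\frac{4}{\gamma^2}\bigl((\beta+c)^2+2A\gamma\bigr).
\]
The hypothesis $(\beta+c)^2+2A\gamma=0$ says exactly that this discriminant vanishes, so the quadratic has a single (double) real root. Identifying that root for a monic quadratic with vanishing discriminant, it is $-\tfrac12$ times the linear coefficient, namely $\varphi=-\frac{\beta+c}{\gamma}$. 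Together with $\psi=0$ this gives the unique stationary point $p=\bigl(-\frac{\beta+c}{\gamma},0\bigr)$, as claimed.

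The argument is elementary, so there is no real obstacle; the only point deserving a word of care is that the prefactor $\frac{4}{\gamma^2}$ is strictly positive by $\gamma\neq 0$, so that the sign of the discriminant is governed precisely by the quantity $(\beta+c)^2+2A\gamma$. This is what makes the three hypotheses $<0$, $=0$, $>0$ of the three lemmas line up exactly with the cases of no root, a double root, and two distinct roots, completing the trichotomy begun in the previous lemma.
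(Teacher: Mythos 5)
Your proof is correct and follows the same route as the paper's: reduce the stationary-point condition to $\psi=0$ together with the quadratic \eqref{eq:quadratic_phi}, and observe that the hypothesis $(\beta+c)^2+2A\gamma=0$ forces a unique (double) real root $\varphi=-\frac{\beta+c}{\gamma}$. The only difference is that you make the discriminant computation explicit, which the paper leaves implicit; this is a welcome bit of extra detail, not a different argument.
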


\begin{proof}
    Under the stated condition there is exactly one real solution of the quadratic equation \eqref{eq:quadratic_phi}, namely $\varphi = -\frac{\beta+c}{\gamma}$, which corresponds to the stationary point $p = \Bigl(-\frac{\beta+c}{\gamma},0\Bigr)$. 
\end{proof}

\begin{lemma}
\label{lem:two_stationary_points}
  If $(\beta+c)^2+2A\gamma> 0$ then there are two stationary points
  \begin{align*}
    p_- & = \Bigl(\frac{ -(\beta+c) - \sqrt{(\beta+c)^2 + 2A\gamma}}{\gamma},0\Bigr),\\
    p_+ & = \Bigl(\frac{ -(\beta+c) + \sqrt{(\beta+c)^2 + 2A\gamma}}{\gamma},0\Bigr).
  \end{align*}
  Moreover, $p_-$ is a center an $p_+$ is a saddle point.
\end{lemma}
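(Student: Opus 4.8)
The plan is to classify the two stationary points by linearising the system \eqref{eq:first_order_system} and, at the point where the linearisation turns out to be inconclusive, to invoke the conserved quantity $H$.

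First I would regard the right-hand side of \eqref{eq:first_order_system} as a vector field and compute its Jacobian at a general point, obtaining
\[
J(\varphi,\psi) = \begin{pmatrix} 0 & 1 \\ \dfrac{(\beta+c)+\gamma\varphi}{\alpha} & 0 \end{pmatrix}.
\]
Its eigenvalues $\lambda$ therefore satisfy $\lambda^2 = \frac{(\beta+c)+\gamma\varphi}{\alpha}$. Using $\gamma\varphi_\pm = -(\beta+c)\pm\sqrt{(\beta+c)^2+2A\gamma}$, the relevant entry becomes $(\beta+c)+\gamma\varphi_\pm = \pm\sqrt{(\beta+c)^2+2A\gamma}$. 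Since $\alpha>0$ and $(\beta+c)^2+2A\gamma>0$, at $p_+$ we get $\lambda^2>0$, i.e.\ two real eigenvalues of opposite sign, so $p_+$ is a hyperbolic saddle; by the Hartman--Grobman theorem it is a saddle point for the full nonlinear flow. At $p_-$ we instead get $\lambda^2<0$, i.e.\ a pair of purely imaginary eigenvalues, which by itself only identifies $p_-$ as a \emph{linear} centre.

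The main obstacle is precisely that purely imaginary eigenvalues are inconclusive for the nonlinear system, since they are equally compatible with a spiral. To settle this I would exploit the Hamiltonian structure: the system \eqref{eq:first_order_system} satisfies $\varphi'=\partial_\psi H$ and $\psi'=-\partial_\varphi H$, so $H$ is a Hamiltonian (consistent with the already-noted fact that $H$ is constant along trajectories). I would then compute the Hessian of $H$, for which $\partial_\psi^2 H = 1$, $\partial_\varphi\partial_\psi H = 0$, and $\partial_\varphi^2 H = -\frac{1}{\alpha}\bigl((\beta+c)+\gamma\varphi\bigr)$. At $p_-$ this last entry equals $\frac{1}{\alpha}\sqrt{(\beta+c)^2+2A\gamma}>0$, so the Hessian is positive definite and $H$ has a strict local minimum at $p_-$. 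Consequently the level sets of $H$ near $p_-$ are closed curves surrounding it, and since trajectories of \eqref{eq:first_order_system} lie on level sets of $H$, these trajectories are periodic orbits encircling $p_-$; hence $p_-$ is a genuine (nonlinear) centre.

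As a consistency check, the same Hessian computation at $p_+$ gives $\partial_\varphi^2 H = -\frac{1}{\alpha}\sqrt{(\beta+c)^2+2A\gamma}<0$, so $H$ has a saddle there, reconfirming the linearised conclusion. The only genuinely non-routine step is the passage from a linear to a nonlinear centre at $p_-$, which the Hamiltonian/energy argument resolves; the saddle at $p_+$ is immediate from hyperbolicity.
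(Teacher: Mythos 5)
Your proof is correct, and the eigenvalue computation at the heart of it is exactly the paper's: the same Jacobian, the same identity $(\beta+c)+\gamma\varphi_\pm = \pm\sqrt{(\beta+c)^2+2A\gamma}$, hence $\lambda^2 \gtrless 0$ at $p_+$ and $p_-$ respectively, giving a hyperbolic saddle at $p_+$ (the paper asserts this directly; your appeal to Hartman--Grobman makes the nonlinear conclusion explicit). Where you genuinely depart from the paper is at $p_-$: the paper's proof stops at ``two conjugate complex eigenvalues on the imaginary axis, so $p_-$ is a center,'' which, taken literally, is a non sequitur for a nonlinear system --- purely imaginary eigenvalues are compatible with a spiral, as you rightly point out. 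You close this gap by invoking the Hamiltonian structure ($\varphi'=\partial_\psi H$, $\psi'=-\partial_\varphi H$), checking that the Hessian of $H$ at $p_-$ is positive definite, and concluding that nearby trajectories lie on the closed level curves of $H$ around its strict local minimum, hence are periodic. The paper does record that $H$ is constant along solutions just before the lemma, so this is morally what the authors rely on, but it never enters their proof of the center claim; your version is the more rigorous one, at the cost of one extra computation (the Hessian of $H$, which in fact duplicates the Jacobian information since the system is Hamiltonian). One cosmetic remark: the existence half of the lemma (solving the quadratic \eqref{eq:quadratic_phi} to get $p_\pm$) is only implicit in your write-up, via the formula for $\gamma\varphi_\pm$; a one-line derivation as in the paper would make the proof self-contained.
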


\begin{proof}
  Under the stated condition the quadratic equation \eqref{eq:quadratic_phi} for $\varphi$ has two real solutions, namely
  \[\varphi_{1,2} = -\frac{\beta+c}{\gamma} \pm \sqrt{\bigl(\frac{\beta+c}{\gamma}\bigr)^2 + \frac{2A}{\gamma}} = \frac{ -(\beta+c) \pm \sqrt{(\beta+c)^2 + 2A\gamma}}{\gamma}.\]
  Since $\psi=0$, we obtain the two stationary points.
  
  Note that the right-hand side of the first order system \eqref{eq:first_order_system} is given by $f(\varphi,\psi) := \begin{pmatrix} \psi \\ \frac{-A+(\beta+c)\varphi + \frac{\gamma}{2}\varphi^2}{\alpha}\end{pmatrix}$.
  The Jacobian of $f$ at $p_-$ has the eigenvalues $\lambda$ determined by
  \[\lambda^2 = -\frac{1}{\alpha} \sqrt{(\beta+c)^2+2A\gamma} <0.\]
  Thus, there are two conjugate complex eigenvalues on the imaginary axis, and so $p_-$ is a center.  
  The Jacobian of $f$ at $p_+$ has the eigenvalues $\lambda$ determined by
  \[\lambda^2 = +\frac{1}{\alpha} \sqrt{(\beta+c)^2+2A\gamma}>0.\]
  Thus, there is one positive and one negative eigenvalue and $p_+$ is a saddle point.
\end{proof}

\begin{lemma}
\label{lem:homoclinic_orbit}
  Let $(\beta+c)^2+2A\gamma> 0$. Then there exists a unique solution $(\varphi,\psi)$ of the first order system \eqref{eq:first_order_system} such that
  \[\lim_{y\to\pm\infty} \varphi(y) = \frac{ -(\beta+c) + \sqrt{(\beta+c)^2 + 2A\gamma}}{\gamma}, \qquad \lim_{y\to\pm\infty} \psi(y) = 0.\]
\end{lemma}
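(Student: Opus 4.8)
The plan is to recognise the desired solution as a \emph{homoclinic orbit} to the saddle point $p_+$ identified in Lemma~\ref{lem:two_stationary_points} and to construct it explicitly using the conserved quantity $H$. Writing $\varphi_+$ for the first coordinate of $p_+$ and $h := H(p_+)$, I first observe that any solution with the prescribed limits must be confined to the level set $\{H = h\}$, since $H$ is continuous and constant along trajectories of \eqref{eq:first_order_system}. Assuming without loss of generality $\gamma > 0$ (as in the earlier proofs), I rewrite the energy relation as $\tfrac12\psi^2 = h - V(\varphi)$, where $V(\varphi) := -\tfrac1\alpha(-A\varphi + \tfrac{\beta+c}2\varphi^2 + \tfrac\gamma6\varphi^3)$ is the cubic potential with $H = \tfrac12\psi^2 + V(\varphi)$.

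Second, I would analyse the cubic $\varphi \mapsto h - V(\varphi)$. Because $V(\varphi_+) = h$ and $V'(\varphi_+) = 0$, the value $\varphi_+$ is a double root; the third root $\varphi_*$ is simple, and a short Vieta computation places it on the far side of the center, $\varphi_* < \varphi_- < \varphi_+$. Thus $h - V(\varphi) = \tfrac{\gamma}{6\alpha}(\varphi - \varphi_*)(\varphi_+ - \varphi)^2 \geq 0$ exactly for $\varphi \geq \varphi_*$, and over the interval $[\varphi_*, \varphi_+]$ the level set $\{H = h\}$ is a closed loop based at $p_+$ that encircles the center $p_-$. This loop is the candidate homoclinic orbit.

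Third, existence and the limiting behaviour follow by quadrature: along the loop $\varphi' = \pm\sqrt{2(h - V(\varphi))}$, so the elapsed variable is $y - y_0 = \pm\int \tfrac{d\varphi}{\sqrt{2(h-V(\varphi))}}$. The key observation is that the double root at $\varphi_+$ makes the integrand behave like $(\varphi_+ - \varphi)^{-1}$ near $\varphi_+$, so the integral diverges there and the trajectory reaches $\varphi_+$ only as $y \to \pm\infty$; conversely the simple root at $\varphi_*$ gives an integrable $(\varphi - \varphi_*)^{-1/2}$ singularity, so the turning point $\varphi_*$ is attained at a finite value of $y$. Hence the orbit is defined for all $y \in \R$, satisfies $\varphi(y) \to \varphi_+$ and, via $\tfrac12\psi^2 = h - V(\varphi) \to 0$, also $\psi(y) \to 0$ as $y\to\pm\infty$; the reflection symmetry $(\varphi,\psi,y) \mapsto (\varphi,-\psi,-y)$ of \eqref{eq:first_order_system} produces the matching limit at the other end. (The integral can in fact be evaluated in closed form, yielding the classical $\sech^2$ profile, but this is not needed for the argument.)

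Finally, for uniqueness I would use that $p_+$ is a hyperbolic saddle, so its local stable and unstable manifolds are one-dimensional and unique; the branch entering the region $\varphi < \varphi_+$ is trapped on the compact loop component of $\{H = h\}$ and has nowhere to go but back to $p_+$, forcing it to coincide with the incoming stable branch. This pins down the homoclinic orbit uniquely, and since the system is autonomous the solution is then determined up to a translation $y \mapsto y - y_0$, which is the sense in which it is ``unique.'' I expect the main obstacle to be precisely this global step --- verifying that the unstable manifold closes up into the loop rather than escaping --- together with the careful convergence/divergence analysis of the quadrature at the two types of roots; the local phase-plane picture from Lemma~\ref{lem:two_stationary_points} and the confinement to a single bounded level-set component are what make it go through.
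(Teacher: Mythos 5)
Your proposal is correct and takes essentially the same route as the paper: the paper's own proof is a one-line appeal to the saddle/center structure from Lemma \ref{lem:two_stationary_points} and the phase portrait (Figure \ref{fig:phase}), i.e.\ to the unique homoclinic orbit of the saddle $p_+$ lying on a level set of the first integral $H$. Your argument simply makes that appeal rigorous---the factorization of the cubic $h-V$ with a double root at $\varphi_+$, the divergent/convergent quadrature at the double versus simple root, and uniqueness via the one-dimensional stable and unstable manifolds---thereby supplying exactly the details (including uniqueness only up to translation, which the paper glosses over) that the paper leaves to the picture.
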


\begin{proof}
  By Lemma \ref{lem:two_stationary_points}, there exists a unique homoclinic orbit (for $p_+$) which corresponds to the unique solution $y\mapsto (\varphi(y),\psi(y))$ approximating $p_+$ at $\pm\infty$, see also Figure \ref{fig:phase}.
\end{proof}

\begin{figure}[htb]
    \centering
    \begin{tikzpicture}
        \begin{axis}[
        xmin = -4, xmax = 4,
        ymin = -4, ymax = 4,
        zmin = 0, zmax = 1,
        axis equal image,
        view = {0}{90},
        samples=31,
        xlabel={$\varphi$},
        ylabel={$\psi$},
        xticklabels=\empty,
        yticklabels=\empty,
        ]
        \addplot3[
            quiver = {
                u = {y/sqrt(y^2+ (x+0.5*x^2)^2)},
                v = {(x+0.5*x^2)/sqrt(y^2+ (x+0.5*x^2)^2)},
                scale arrows = 0.25,
            },
            -stealth,
            domain = -4:4,
            domain y = -4:4,
        ] {0};
        \addplot coordinates {
        (-2,0)
        };
        \addplot coordinates {
        (0,0)
        };
        \addplot[domain=-3:0, blue, thick]{
            sqrt(2*(0.5*x^2+1/6*x^3))
        };
        \addplot[domain=-3:0, blue, thick]{
            -sqrt(2*(0.5*x^2+1/6*x^3))
        };
        \addplot[domain=0:3, blue, thick]{
            sqrt(2*(0.5*x^2+1/6*x^3))
        };
        \addplot[domain=0:3, blue, thick]{
            -sqrt(2*(0.5*x^2+1/6*x^3))
        };
\end{axis}

    \end{tikzpicture}
\caption{Phase portrait for \eqref{eq:first_order_system}.}
\label{fig:phase}
\end{figure}
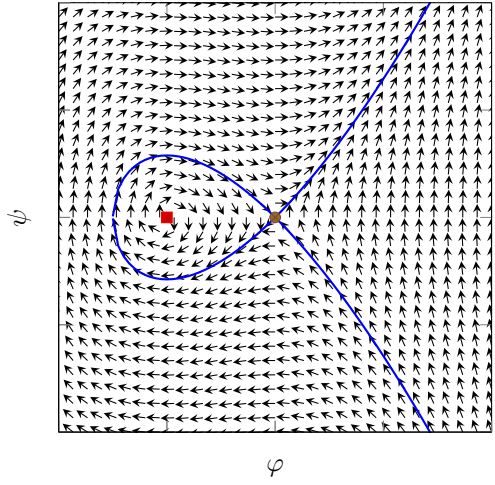

In view of Lemma \ref{lem:homoclinic_orbit}, in order to obtain a solitary wave we have to assume $A=0$ and $\beta+c>0$; we therefore specialize ot this case.

\begin{lemma}
\label{lem:sign}
  Let $A=0$ and $\beta+c>0$. Let $(\varphi,\psi)$ be the unique solution of the first order system according to Lemma \ref{lem:homoclinic_orbit}. Then:
  \begin{enumerate}
    \item
      $(\varphi,\psi)$ is given by 
      \[\psi^2 = \frac{\beta+c}{\alpha} \varphi^2 + \frac{\gamma}{3\alpha}\varphi^3\]
      and
      \[\lim_{y\to\pm\infty} \varphi(y) = \lim_{y\to\pm\infty} \psi(y) = 0.\]
    \item
      There exists a unique $y_0\in\R$ such that $\varphi(y_0) = -\frac{3}{\gamma}(\beta+c)$ and $\psi(y_0) = 0$. 
    \item 
      Let $\gamma>0$. Then $\varphi(y)<0$ for all $y\in\R$, $\varphi$ is decreasing on $(-\infty,y_0)$ and $\varphi$ is increasing on $(y_0,\infty)$.
    \item
      Let $\gamma<0$. Then $\varphi(y)>0$ for all $y\in\R$, $\varphi$ is increasing on $(-\infty,y_0)$ and $\varphi$ is decreasing on $(y_0,\infty)$.
  \end{enumerate}
\end{lemma}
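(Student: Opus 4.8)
The plan is to derive all four assertions from the conserved quantity $H$ together with the boundary behaviour of the homoclinic orbit furnished by Lemma \ref{lem:homoclinic_orbit}. For (a), note first that with $A=0$ and $\beta+c>0$ we have $\sqrt{(\beta+c)^2+2A\gamma}=\beta+c$, so the saddle $p_+$ is the origin and the orbit satisfies $\lim_{y\to\pm\infty}(\varphi(y),\psi(y))=(0,0)$, which is precisely the claimed limit. Since $H$ is constant along solutions and continuous, evaluating at $\pm\infty$ gives $H(\varphi(y),\psi(y))=H(0,0)=0$ for every $y$. Inserting $A=0$ into the definition of $H$ and solving $H=0$ for $\psi^2$ produces exactly $\psi^2=\frac{\beta+c}{\alpha}\varphi^2+\frac{\gamma}{3\alpha}\varphi^3$.

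For (b) I would read off the turning points by factoring the identity from (a) as $\psi^2=\frac{\varphi^2}{\alpha}\bigl(\beta+c+\frac{\gamma}{3}\varphi\bigr)$. Hence $\psi=0$ forces $\varphi\in\{0,\,-\frac{3}{\gamma}(\beta+c)\}$. The value $\varphi=0$ corresponds to the saddle $p_+$, which is attained only in the limits $y\to\pm\infty$; therefore at any finite $y$ with $\psi(y)=0$ one must have $\varphi(y)=\varphi_*:=-\frac{3}{\gamma}(\beta+c)$. Existence of such a $y_0$ follows because $\varphi$ is continuous, not identically $0$, and tends to $0$ at both ends, so it attains a nonzero interior extremum, where $\varphi'=\psi=0$. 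For uniqueness I would observe that $(\varphi_*,0)$ is a regular point of the vector field, since $\psi'(y_0)=\frac{1}{\alpha}\bigl((\beta+c)\varphi_*+\frac{\gamma}{2}\varphi_*^2\bigr)=\frac{3(\beta+c)^2}{2\alpha\gamma}\neq 0$; a non-periodic orbit of an autonomous system visits each regular point at most once, and the homoclinic orbit is non-periodic, so $\varphi=\varphi_*$ holds at a single $y_0$.

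Finally, (c) and (d) reduce to two sign computations. The nonnegativity $\psi^2=\frac{\varphi^2}{\alpha}\bigl(\beta+c+\frac{\gamma}{3}\varphi\bigr)\geq 0$ confines $\varphi$ to the closed interval between $0$ and $\varphi_*$; since $\varphi$ never equals $0$ at finite $y$, it stays strictly inside, so $\varphi<0$ throughout when $\gamma>0$ (then $\varphi_*<0$) and $\varphi>0$ throughout when $\gamma<0$ (then $\varphi_*>0$). For the monotonicity, the sign of $\psi'(y_0)=\frac{3(\beta+c)^2}{2\alpha\gamma}$ agrees with the sign of $\gamma$, and by (b) this is the only finite zero of $\psi$: when $\gamma>0$, $\psi$ increases through $0$ at $y_0$, so $\varphi'=\psi<0$ on $(-\infty,y_0)$ and $\varphi'=\psi>0$ on $(y_0,\infty)$, yielding (c); when $\gamma<0$ the sign of $\psi'(y_0)$ reverses, so $\varphi$ is increasing on $(-\infty,y_0)$ and decreasing on $(y_0,\infty)$, yielding (d).

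The step I expect to be the main obstacle is the uniqueness of $y_0$ in (b): converting the intuitive picture of a single homoclinic loop into a rigorous statement that $\psi$ has exactly one finite zero. The algebraic characterisation of the zero set of $\psi$ via the factored right-hand side, combined with the transversality $\psi'(y_0)\neq 0$ (which shows $\varphi_*$ is a regular, hence singly visited, point of the non-periodic orbit), is what makes this precise; the same computation then powers the sign-change argument underlying the monotonicity claims in (c) and (d).
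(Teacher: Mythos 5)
Your parts (a) and (b) are correct and essentially follow the paper's own route: (a) evaluates the conserved quantity $H$ along the orbit and at the saddle $p_+=(0,0)$, and (b) combines the factorisation of the level set with the observation that revisiting the point $\bigl(-\tfrac{3}{\gamma}(\beta+c),0\bigr)$ would force the orbit to be periodic or constant, contradicting the limits at $\pm\infty$; your transversality computation $\psi'(y_0)=\tfrac{3(\beta+c)^2}{2\alpha\gamma}\neq 0$ is exactly the quantity the paper uses there. You even supply an explicit existence argument for $y_0$ (a nonzero continuous function vanishing at $\pm\infty$ has an interior extremum) which the paper leaves implicit in the phrase ``homoclinic orbit''.

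In (c)/(d), however, the step ``the nonnegativity $\psi^2=\frac{\varphi^2}{\alpha}\bigl(\beta+c+\frac{\gamma}{3}\varphi\bigr)\geq 0$ confines $\varphi$ to the closed interval between $0$ and $\varphi_*$'' is false as stated. For $\gamma>0$, nonnegativity of the right-hand side only forces $\varphi\geq\varphi_*$: the level set $\{H=0\}$ also contains two unbounded branches with $\varphi>0$, on which $\psi=\pm\varphi\sqrt{(\beta+c+\gamma\varphi/3)/\alpha}$, so the energy identity alone does not exclude $\varphi>0$. Excluding it requires a dynamical argument, for instance: $\varphi$ cannot vanish at finite $y$ (else $\psi$ vanishes too by the identity in (a), so the orbit sits at the stationary point $p_+$ and is constant by uniqueness), hence $\varphi$ has constant sign; if that sign were positive, then $\psi$ --- which by the identity vanishes only where $\varphi\in\{0,\varphi_*\}$, hence nowhere --- would have constant sign, making $\varphi$ strictly monotone on all of $\R$, incompatible with $\lim_{y\to-\infty}\varphi(y)=\lim_{y\to+\infty}\varphi(y)=0$. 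The cleanest repair is simply to reorder your own argument: your zero-counting for $\psi$ plus the sign of $\psi'(y_0)$ already yields the monotonicity claim (for $\gamma>0$: $\psi<0$ on $(-\infty,y_0)$ and $\psi>0$ on $(y_0,\infty)$), and then $\varphi<0$ everywhere follows from this monotonicity together with the zero limits, with no confinement claim needed. Note also that this route differs genuinely from the paper's, which instead determines the signs of $\varphi$ and $\psi$ from the stable and unstable eigendirections of the saddle $p_+$; your sign-of-$\psi'$ argument, once repaired, is arguably more elementary than that local linearisation analysis.
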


\begin{proof}
  (a)
  By Lemma \ref{lem:homoclinic_orbit}, there exists a unique solution $(\varphi,\psi)$, and $\varphi$ and $\psi$ have the stated limits at $\pm\infty$.
  Since $A=0$ we have $p_+ = \bigl(0,0\bigr)$. Moreover, as $H$ is constant along trajectories, for the unique solution $(\varphi,\psi)$ describing the homoclinic orbit we obtain $H(\varphi,\psi) = 0$ and therefore
  \[\psi^2 =  \frac{\beta+c}{\alpha} \varphi^2 + \frac{\gamma}{3\alpha}\varphi^3.\]
  
  (b)
  Let $y\in\R$ such that $\psi(y) = 0$. Then 
  \[0 = \frac{\beta+c}{\alpha} \varphi(y)^2 + \frac{\gamma}{3\alpha}\varphi(y)^3,\]
  and therefore $\varphi(y) = 0$ or $\varphi(y) = -\frac{3}{\gamma}(\beta+c)$.
  Assume there are $y_1,y_2\in\R$, $y_1< y_2$ such that $\varphi(y_1)=\varphi(y_2) = -\frac{3}{\gamma}(\beta+c)$ and $\psi(y_1)=\psi(y_2) = 0$. 
  Since $\lim_{y\to\pm\infty} \varphi(y) = 0$ and the first order system \eqref{eq:first_order_system} is uniquely solvable given the values $\varphi(y)$ and $\psi(y)$ of $\varphi$ and $\psi$ at $y\in \R$, then necessarily
  $\varphi(y) = -\frac{3}{\gamma}(\beta+c)$ and $\psi(y) = 0$ for all $y\in (y_1,y_2)$ (otherwise we would have a periodic orbit, contradicting the limit as $y\to \pm\infty$).
  Exploiting the first order system \eqref{eq:first_order_system} then yields
  \[\varphi'(y) = 0,\qquad \psi'(y) = \frac{3(\beta+c)^2}{2\alpha\gamma} \neq 0 \quad(y\in (y_1,y_2)),\]
  a contradiction.  
  
  (c)
  Let $\gamma>0$. Then the first component of $p_-$ is negative. 
  The unstable direction for the Jacobian of $f$ at $p_+$ is given by $\lin \begin{pmatrix} 1\\\sqrt{\frac{1}{\alpha}(\beta+c)}\end{pmatrix}$.
  Therefore, the homoclinic orbit describing the unique solution leaves $p_+$ in direction $\begin{pmatrix} -1\\ -\sqrt{\frac{1}{\alpha}(\beta+c)}\end{pmatrix}$.
  Hence, $\varphi(y)<0$ for all $y\in (-\infty,y_0)$ and since $\psi = \varphi'$ is negative on $(-\infty,y_0)$, $\varphi$ is decreasing there. 
  Analogously, the stable direction for the Jacobian of $f$ at $p_+$ is given by $\lin \begin{pmatrix} 1\\ -\sqrt{\frac{1}{\alpha}(\beta+c)}\end{pmatrix}$. 
  Therefore, the homoclinic orbit describing the unique solution enters $p_+$ in direction $\begin{pmatrix} -1\\ \sqrt{\frac{1}{\alpha}(\beta+c)}\end{pmatrix}$.
  Thus, $\varphi(y)>0$ for all $y\in (y_0,\infty)$ and since $\psi = \varphi'$ is positive on $(y_0,\infty)$, $\varphi$ is increasing there.  
  
  (d)
  The case $\gamma<0$ is proved analogously to the case $\gamma>0$.
\end{proof}


For the next theorem note that $\sech(\cdot) = \frac{1}{\cosh(\cdot)}$ by definition.

\begin{theorem}
\label{thm:KdV_R_solitary_wave}
  Let $A=0$ and $\beta+c>0$.
  Then \eqref{eq:KdV} admits a solitary wave solution determined by
  \[\varphi(y) = -\frac{3(\beta+c)}{\gamma}\sech^2\Bigl(\frac{\sqrt{\frac{\beta+c}{\alpha}}}{2}(y-y_0)\Bigr)\qquad(y\in\R).\]
\end{theorem}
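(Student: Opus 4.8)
The plan is to exhibit the claimed profile explicitly, verify that it solves the first-order system \eqref{eq:first_order_system} with $A=0$, and then invoke the uniqueness already established in Lemma \ref{lem:homoclinic_orbit}. To keep the computation transparent I would abbreviate $k:=\sqrt{\frac{\beta+c}{\alpha}}$ (well-defined and strictly positive since $\alpha>0$ and $\beta+c>0$) and $a:=-\frac{3(\beta+c)}{\gamma}$ (nonzero, as $\beta+c>0$ and $\gamma\neq0$), so that the candidate reads $\varphi(y)=a\,\sech^2\bigl(\tfrac{k}{2}(y-y_0)\bigr)$, and I set $\psi:=\varphi'$.

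The decisive step is to check the first integral recorded in Lemma \ref{lem:sign}(a), namely $\psi^2=\frac{\beta+c}{\alpha}\varphi^2+\frac{\gamma}{3\alpha}\varphi^3$. Using $\frac{d}{ds}\sech s=-\sech s\,\tanh s$ I obtain $\psi(y)=-ak\,\sech^2\bigl(\tfrac{k}{2}(y-y_0)\bigr)\tanh\bigl(\tfrac{k}{2}(y-y_0)\bigr)$, so by the identity $\tanh^2=1-\sech^2$ the left-hand side equals $a^2k^2\sech^4-a^2k^2\sech^6$, while the right-hand side equals $a^2k^2\sech^4+\frac{\gamma}{3\alpha}a^3\sech^6$. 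The $\sech^4$-coefficients agree automatically, and equality of the $\sech^6$-coefficients reduces, after dividing by $a^2\neq0$, to the single scalar relation $\frac{\gamma}{3\alpha}a=-k^2$, which is precisely the defining identity $a=-\frac{3(\beta+c)}{\gamma}$. Differentiating $\psi^2=\frac{\beta+c}{\alpha}\varphi^2+\frac{\gamma}{3\alpha}\varphi^3$ and cancelling a factor $\psi$ wherever $\psi\neq0$ then yields $\psi'=\frac{(\beta+c)\varphi+\frac{\gamma}{2}\varphi^2}{\alpha}$, i.e.\ the second equation of \eqref{eq:first_order_system} with $A=0$; by continuity of $\varphi''$ this persists at the single zero $y_0$ of $\psi$, so $(\varphi,\psi)$ genuinely solves the system.

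Finally I would read off the boundary behaviour: since $\sech^2\to0$ at $\pm\infty$ one has $\lim_{y\to\pm\infty}\varphi(y)=\lim_{y\to\pm\infty}\psi(y)=0$, so $(\varphi,\psi)$ is a homoclinic orbit of $p_+=(0,0)$, whence by Lemma \ref{lem:homoclinic_orbit} it is \emph{the} unique such solution; moreover $\varphi$ has its single extremum at $y_0$ and existing limits at $\pm\infty$, so $u(t,x)=\varphi(x-ct)$ is a solitary wave in the sense of the definition. I expect the coefficient-matching above to be the only real computation. The one point requiring genuine care is orientation: I would check that the sign of $\psi=\varphi'$, which is governed by $\tanh$ together with the sign of $a$ (equivalently, of $\gamma$), matches the monotonicity recorded in Lemma \ref{lem:sign}(c),(d), thereby confirming that the explicit profile traces the homoclinic orbit and not a spurious branch of the level set $H=0$.
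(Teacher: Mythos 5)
Your proposal is correct, but it proves the theorem by a genuinely different route than the paper. The paper \emph{derives} the formula: starting from the homoclinic orbit guaranteed by Lemma \ref{lem:homoclinic_orbit}, it uses the sign and monotonicity information from Lemma \ref{lem:sign} to split the line at $y_0$, writes $\psi = \mp\sqrt{\tfrac{\beta+c}{\alpha}\varphi^2+\tfrac{\gamma}{3\alpha}\varphi^3}$ on each side, and integrates the separable ODE to solve for $\varphi$ --- the evaluation of that integral being the (largely suppressed) computational core, carried out separately for $\gamma>0$ and $\gamma<0$. You instead \emph{verify}: you plug the explicit $\sech^2$ ansatz into the first integral of Lemma \ref{lem:sign}(a), reduce the check to matching $\sech^6$-coefficients via $\tanh^2 = 1-\sech^2$, and recover the second equation of \eqref{eq:first_order_system} (with $A=0$) by differentiating the first integral and cancelling $\psi$ away from its single zero, extending to $y_0$ by continuity; the limits at $\pm\infty$ and the unique critical point at $y_0$ then give the solitary-wave properties directly. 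Your computation is correct (in particular the coefficient matching $-a^2k^2=\tfrac{\gamma}{3\alpha}a^3$ is exactly the definition of $a$), and this route is more elementary: it needs only differentiation and one hyperbolic identity, avoids evaluating the integral from separation of variables, and handles both signs of $\gamma$ uniformly. It also suffices for the statement as written, which asserts only \emph{existence} of a solitary wave with the stated profile. What the paper's derivation buys instead is that the formula is \emph{forced}: every solution tracing the homoclinic orbit must have this form, a fact your verification alone does not yield; your closing appeal to the uniqueness in Lemma \ref{lem:homoclinic_orbit}, together with the orientation check against Lemma \ref{lem:sign}(c),(d), recovers this identification, though neither is strictly needed for the existence claim.
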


\begin{proof}
  Let $\gamma>0$. 
  By Lemma \ref{lem:sign}, for $y<y_0$ we have $\varphi(y)<0$ and $\psi(y)<0$, and moreover $\psi(y) = -\sqrt{\frac{\beta+c}{\alpha} \varphi(y)^2 + \frac{\gamma}{3\alpha}\varphi(y)^3}$.
  Taking into account the facts that $\psi = \varphi'$ and $\varphi(y_0) = -\frac{3}{\gamma}(\beta+c)$, applying separation of variables and integrating, we observe
  \[\int_{\varphi(y)}^{-\frac{3}{\gamma}(\beta+c)} - \sqrt{\frac{1}{\frac{\beta+c}{\alpha} \varphi^2 + \frac{\gamma}{3\alpha}\varphi^3}}\,d\varphi = \int_y^{y_0} \,dy = y_0-y.\]
  Integrating and solving for $\varphi(y)$ yields
  \[\varphi(y) = -\frac{3(\beta+c)}{\gamma}\sech^2\Bigl(\frac{\sqrt{\frac{\beta+c}{\alpha}}}{2}(y_0-y)\Bigr) \qquad(y<y_0).\]
  Analogously as above, Lemma \ref{lem:sign} yields for $y>y_0$ that $\varphi(y)<0$ and $\psi(y)>0$, and moreover $\psi(y) = \sqrt{\frac{\beta+c}{\alpha} \varphi(y)^2 + \frac{\gamma}{3\alpha}\varphi(y)^3}$.
  Since $\psi=\varphi'$ and $\varphi(y_0) = -\frac{3}{\gamma}(\beta+c)$, separation of variables yields
  \[\int_{-\frac{3}{\gamma}(\beta+c)}^{\varphi(y)} \sqrt{\frac{1}{\frac{\beta+c}{\alpha} \varphi^2 + \frac{\gamma}{3\alpha}\varphi^3}}\,d\varphi = \int_{y_0}^y \,dy = y-y_0.\]
  Integrating and solving for $\varphi(y)$ we obtain
  \[\varphi(y) = -\frac{3(\beta+c)}{\gamma}\sech^2\Bigl(\frac{\sqrt{\frac{\beta+c}{\alpha}}}{2}(y-y_0)\Bigr) \qquad(y>y_0).\]
  Since $\sech(x) = \sech(-x)$ for all $x\in\R$, $\sech(0) = 1$ and $\varphi(y_0) = -\frac{3(\beta+c)}{\gamma}$ we thus obtain
  \[\varphi(y) = -\frac{3(\beta+c)}{\gamma}\sech^2\Bigl(\frac{\sqrt{\frac{\beta+c}{\alpha}}}{2}(y-y_0)\Bigr) \qquad(y\in\R).\]
  
  Now, let $\gamma<0$. Then analogously we obtain
  \[\varphi(y) = -\frac{3(\beta+c)}{\gamma}\sech^2\Bigl(\frac{\sqrt{\frac{\beta+c}{\alpha}}}{2}(y-y_0)\Bigr)\qquad(y\in\R). \qedhere\]
\end{proof}

\section{The Airy equation on metric star graphs}
\label{sec:Airy}

In this section we consider the linear part of the Korteweg--de Vries equation on metric star graphs with semi-infinite edges.

Consider star graph with halfline edges attached at the vertex $0$. Let $E=E_-\cup E_+$ be finite, where $e\in E_-$ is an edge parametrized by the interval $(-\infty,0)$, and $e\in E_+$ is an edge parametrized $(0,\infty)$.
For $e\in E$ let $\alpha_e>0$ and $\beta_e\in \R$. 

\begin{figure}[htb]
    \centering
    \begin{tikzpicture}[scale=0.7,>=stealth]
        \draw[fill] (0,0) circle(0.1) node[below]{$0$};
        \draw (0,0)--(-3,-2);
        \draw[->] (-3,-2)--(-1.5,-1);
        \draw[dashed] (-3,-2)--(-6,-4);
        \draw (0,0)--(-3,-1);
        \draw[->] (-3,-1)--(-1.5,-0.5);
        \draw[dashed] (-3,-1)--(-6,-2);
        \draw (0,0)--(-3,-0);
        \draw[->] (-3,-0)--(-1.5,-0);
        \draw[dashed] (-3,-0)--(-6,-0);
        \draw (0,0)--(-3,1);
        \draw[->] (-3,1)--(-1.5,0.5);
        \draw[dashed] (-3,1)--(-6,2);
        \draw (0,0)--(-3,2);
        \draw[->] (-3,2)--(-1.5,1);
        \draw[dashed] (-3,2)--(-6,4);
        \draw (0,0)--(3,-2);
        \draw[->] (0,0)--(1.5,-1);
        \draw[dashed] (3,-2)--(6,-4);
        \draw (0,0)--(3,-1);
        \draw[->] (0,0)--(1.5,-0.5);
        \draw[dashed] (3,-1)--(6,-2);
        \draw (0,0)--(3,1);
        \draw[->] (0,0)--(1.5,0.5);
        \draw[dashed] (3,1)--(6,2);
        \draw (0,0)--(3,2);
        \draw[->] (0,0)--(1.5,1);
        \draw[dashed] (3,2)--(6,4);
    \end{tikzpicture}
    \caption{A metric star graph with $|E_-|=5$ and $|E_+| = 4$.}
\label{fig:stargraph}
\end{figure}
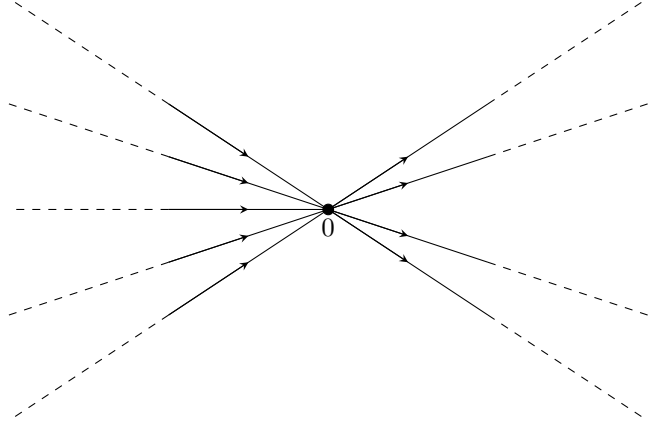

We consider the \emph{Airy equation}
\[\partial_t u_e = -\alpha_e \partial_x^3 u_e + \beta_e \partial_x u_e \quad\text{on $\R\times \Omega_e$},\]
where $\Omega\in\set{(-\infty,0),(0,\infty)}$ corresponds to the parametrization of $e$ and $e\in E$.
Note that the Airy equation can (also) be deduced from the KdV equation by setting $\gamma=0$ (which was formally not allowed above) or linearizing the equation around the solution $u=0$.
In any case, the Airy equation yields the linear part of the KdV equation.

We follow the arguments in \cite{MugnoloNojaSeifert2018,Seifert2018} (note that we changed the sign in front of the $\alpha_e$'s compared to the references). All vector spaces appearing are over the field $\K\in\{\R,\C\}$.

Define the minimal operator $A_0$ in $\calH:=\bigoplus_{e\in E_-} L_2(-\infty,0) \oplus \bigoplus_{e\in E_+} L_2(0,\infty)$ by
\begin{align*}
  D(A_0) & := \bigoplus_{e\in E_-} C_c^\infty(-\infty,0) \oplus \bigoplus_{e\in E_+} C_c^\infty(0,\infty),\\
  A_0 u & := (-\alpha_e u_e''' + \beta_e u_e')_{e\in E} \quad(u\in D(A_0)).
\end{align*}
Then an easy computation yields
\begin{align*}
  D(A_0^*) & = \bigoplus_{e\in E_-} H^3(-\infty,0) \oplus \bigoplus_{e\in E_+} H^3(0,\infty),\\
  A_0^* u & := (\alpha_e u_e''' - \beta_e u_e')_{e\in E} \quad(u\in D(A_0)),
\end{align*}
and hence $A_0$ is skew-symmetric, i.e.\ densely defined and $A_0\subseteq -A_0^*$.

We search for operators $A_0\subseteq A\subseteq -A_0^*$ such that the abstract Cauchy problem
\begin{align*}
  u'(t) & = Au(t) \quad(t>0),\\
  u(0) & = u_0\in \calH
\end{align*}
is well-posed, i.e.\ $A$ is the generator of a $C_0$-semigroup. For this we will have to impose coupling conditions at the vertex $0$.
Note that this abstract Cauchy problem is a functional analytic description of the Airy equation on each edge $e\in E$ including the coupling conditions at $0$.

For $u\in D(A_0^*)$ let us abbreviate
\[u^{(k)}(0\llim):=\bigl(u_e^{(k)}(0\llim)\bigr)_{e\in E_-}, \quad u^{(k)}(0\rlim):=\bigl(u_e^{(k)}(0\rlim)\bigr)_{e\in E_+} \quad(k\in\set{0,1,2}).\]
Moreover, let us denote by $\alpha_\pm$ and $\beta_\pm$ the multiplication operators on $\K^{E_\pm}$ defined by
\[\alpha_\pm x := (\alpha_e x_e)_{e\in E_\pm},\quad \beta_\pm x:= (\beta_e x_e)_{e\in E_\pm} \quad(x\in \K^{E_\pm}).\]
Define
\[\calG_\pm:=\K^{E_\pm}\oplus\K^{E_\pm}\oplus\K^{E_\pm},\]
and the block operator matrices $B_\pm$ on $\calG_\pm$ by
\[B_\pm  = \begin{pmatrix}
	    -\beta_\pm & 0 & \alpha_\pm \\
	    0 & -\alpha_\pm & 0 \\
	    \alpha_\pm & 0 & 0
           \end{pmatrix}.\]
Since $(\alpha_e)_{e\in E}$, $(1/\alpha_e)_{e\in E}$ and $(\beta_e)_{e\in E}$ are bounded we have $B_\pm\in \calL(\calG_\pm)$, $B_\pm$ is injective and $B_\pm^{-1}\in\calL(\calG_\pm)$.
Define an indefinite inner product $\dupa{\cdot}{\cdot}_\pm\from \calG_\pm\times\calG_\pm\to \K$ by 
\[\dupa{x}{y}_\pm := \sp{B_\pm x}{y}_{\calG_\pm} \quad(x,y\in\calG_\pm).\]
Then $\calK_\pm:=\bigl(\calG_\pm, \dupa{\cdot}{\cdot}_\pm\bigr)$ are Krein spaces.

For a linear operator $L\from \calK_- \to\calK_+$ we define
\begin{align*}
  D(A_L) & := \set{u\in D(A_0^*);\; L\bigl(u(0\llim),u'(0\llim),u''(0\llim)\bigr) = \bigl(u(0\rlim),u'(0\rlim),u''(0\rlim)\bigr)},\\
  A_L u & := -A_0^* u \quad(u\in D(A_L)).
\end{align*}

\begin{proposition}[{see \cite[Theorem 3.9 and Theorem 3.16]{MugnoloNojaSeifert2018}}]
\label{prop:A_L}
  Let $L\from \calK_- \to\calK_+$ be a linear operator. Then:
  \begin{enumerate}
    \item
      $A_L$ generates a unitary $C_0$-group if and only if $L$ is $(\calK_-,\calK_+)$-unitary, i.e.
      \[\dupa{Lx}{Ly}_+  = \dupa{x}{y}_- \quad(x,y\in\calK_-).\]
    \item
      $A_L$ generates a contractive $C_0$-semigroup if and only if $L$ is $(\calK_-,\calK_+)$-contractive, i.e.\
      \[\dupa{Lx}{Lx}_+  \leq \dupa{x}{x}_- \quad(x\in\calK_-),\]
      and $L^\sharp$ is $(\calK_+,\calK_-)$-contractive, i.e.\
      \[\dupa{L^\sharp x}{L^\sharp x}_- \leq \dupa{x}{x}_+ \quad(x\in\calK_+),\]
      where $L^\sharp\from \calK_+\to \calK_-$ is the $(\calK_-,\calK_+)$-adjoint of $L$ given by
      \[\dupa{Lx}{y}_{+} = \dupa{x}{L^\sharp y}_- \quad(x\in \calK_-,y\in\calK_+).\]
  \end{enumerate} 
\end{proposition}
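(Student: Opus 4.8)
The plan is to treat this as an instance of the Krein-space boundary-triple framework of \cite{SchubertSeifertVoigtWaurick2015}, which converts generation properties of the extension $A_L$ into algebraic conditions on $L$. Everything is driven by the boundary form of the maximal operator $-A_0^*$. Integrating by parts three times on each edge (the contributions at $\pm\infty$ vanish, since $H^3$-functions on a halfline decay together with their first two derivatives), I obtain, for $u,v\in D(A_0^*)$ and with the boundary data abbreviated by $\hat u_-:=\bigl(u(0\llim),u'(0\llim),u''(0\llim)\bigr)\in\calK_-$ and $\hat u_+:=\bigl(u(0\rlim),u'(0\rlim),u''(0\rlim)\bigr)\in\calK_+$,
\[\sp{-A_0^* u}{v}_{\calH}+\sp{u}{-A_0^* v}_{\calH}=\dupa{\hat u_+}{\hat v_+}_+-\dupa{\hat u_-}{\hat v_-}_-.\]
The two boundary contributions are exactly the Krein inner products on $\calK_+$ and $\calK_-$ --- which is precisely what the block structure of $B_\pm$ encodes --- and the minus sign in front of the $\calK_-$-term reflects the orientation of the edges in $E_-$.

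Next I record three consequences. Since $H^3$-functions on a halfline realise arbitrary triples of boundary values, the trace map $u\mapsto(\hat u_-,\hat u_+)$ is surjective onto $\calK_-\oplus\calK_+$; hence for $u\in D(A_L)$ the defining relation is $\hat u_+=L\hat u_-$ with $\hat u_-$ ranging over all of $\calK_-$, and the boundary form restricted to $D(A_L)\times D(A_L)$ equals $\dupa{Lx}{Ly}_+-\dupa{x}{y}_-$ for arbitrary $x,y\in\calK_-$. Because $D(A_0)\subseteq D(A_L)$, the operator $A_L$ is densely defined. Finally, reading the displayed identity with $u\in D(A_L)$ fixed and $w\in D(A_0^*)$ free identifies the Hilbert-space adjoint: $A_L^*$ acts as $A_0^*$ on the domain $\set{w\in D(A_0^*);\ L^\sharp\hat w_+=\hat w_-}$, where $L^\sharp$ is the $(\calK_-,\calK_+)$-adjoint of $L$.

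For part (a) I invoke Stone's theorem: $A_L$ generates a unitary $C_0$-group if and only if $A_L^*=-A_L$. Skew-symmetry of $A_L$ amounts to the vanishing of the boundary form on $D(A_L)\times D(A_L)$, i.e.\ $\dupa{Lx}{Ly}_+=\dupa{x}{y}_-$ for all $x,y$, which is $(\calK_-,\calK_+)$-unitarity of $L$; the upgrade to $A_L^*=-A_L$ then follows because the graph $\set{(x,Lx);\ x\in\calK_-}$ is a maximal neutral subspace of the boundary space, so the domain of $A_L^*$ cannot exceed $D(A_L)$, while the converse implication is immediate. For part (b) I use the contraction version of the Lumer--Phillips theorem: a densely defined operator generates a contraction $C_0$-semigroup if and only if both it and its adjoint are dissipative. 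Dissipativity $\operatorname{Re}\sp{A_L u}{u}_{\calH}\le 0$ is, by the boundary form with $v=u$, exactly $\dupa{Lx}{Lx}_+\le\dupa{x}{x}_-$, i.e.\ $(\calK_-,\calK_+)$-contractivity of $L$; applying the same criterion to $A_L^*$ (equivalently $\operatorname{Re}\sp{-A_0^* w}{w}_{\calH}\ge 0$ on its domain) gives $\dupa{L^\sharp x}{L^\sharp x}_-\le\dupa{x}{x}_+$, i.e.\ $(\calK_+,\calK_-)$-contractivity of $L^\sharp$, so the two contractivity conditions together characterise the generation property.

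The main obstacle I anticipate is not the boundary-form algebra but justifying the passage from mere symmetry/dissipativity of $A_L$ to the self-adjointness/maximality demanded by Stone's and Lumer--Phillips' theorems. This rests on the surjectivity of the trace map and on the correct identification of the domain of $A_L^*$ through the Krein adjoint $L^\sharp$; making precise that the graph of a Krein-unitary (respectively bi-contractive) operator $L$ is a maximal neutral (respectively maximally nonnegative) boundary subspace, and hence yields a skew-self-adjoint (respectively maximally dissipative) generator, is the technical heart, and this is exactly where the abstract theory of \cite{SchubertSeifertVoigtWaurick2015} does the real work.
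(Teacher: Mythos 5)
First, note that the paper does not prove Proposition \ref{prop:A_L} at all: it is imported with a citation to \cite{MugnoloNojaSeifert2018}, whose proof rests on the Krein-space extension theory of \cite{SchubertSeifertVoigtWaurick2015}. Your reconstruction follows the same strategy as that reference, and most of it is sound: the Lagrange identity $\sp{-A_0^*u}{v}_{\calH} + \sp{u}{-A_0^*v}_{\calH} = \dupa{\hat u_+}{\hat v_+}_+ - \dupa{\hat u_-}{\hat v_-}_-$ is correct with the paper's $B_\pm$ (the signs check out), the trace map onto $\calK_-\oplus\calK_+$ is surjective, the identification $D(A_L^*) = \set{w\in D(A_0^*);\, L^\sharp \hat w_+ = \hat w_-}$ with $A_L^*w = A_0^*w$ is right, and your treatment of part (b) via the closed-operator form of Lumer--Phillips (both $A_L$ and $A_L^*$ dissipative) is correct, modulo the unstated but easily verified fact that $A_L$ is closed (it is the restriction of the closed operator $-A_0^*$ by boundary conditions that are continuous for the graph norm).

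Part (a), however, contains a genuine gap, exactly at the step you yourself call the technical heart. Vanishing of the boundary form on $D(A_L)\times D(A_L)$ says only that the graph $M := \set{(x,Lx);\, x\in\calK_-}$ is \emph{neutral} for the form $\omega\bigl((x_-,x_+),(y_-,y_+)\bigr) := \dupa{x_+}{y_+}_+ - \dupa{x_-}{y_-}_-$, equivalently $L^\sharp L = I_{\calK_-}$, equivalently $M\subseteq M^{[\perp]}$, where $M^{[\perp]}$ denotes the $\omega$-orthogonal companion. Stone's theorem needs $A_L^*=-A_L$, i.e.\ the \emph{equality} $M = M^{[\perp]}$, which is equivalent to $L^\sharp L=I_{\calK_-}$ together with $LL^\sharp = I_{\calK_+}$, i.e.\ to $L$ being isometric \emph{and bijective}. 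Isometry alone does not give this: since $\omega$ is non-degenerate, $\dim M^{[\perp]} = 3\abs{E_-}+3\abs{E_+} - \dim M = 3\abs{E_+}$, so $M = M^{[\perp]}$ is impossible unless $\abs{E_-}=\abs{E_+}$; on the other hand, each edge block of $B_\pm$ has one positive and two negative eigenvalues, so $\calK_\pm$ has signature $(\abs{E_\pm}, 2\abs{E_\pm})$, and non-surjective Krein isometries $L\from\calK_-\to\calK_+$ exist whenever $\abs{E_-}<\abs{E_+}$. For such an $L$ the operator $A_L$ is skew-symmetric but $-A_L\subsetneq A_L^*$, and no unitary group is generated --- a counterexample to your inference; note also that your fallback claim fails on its own terms, since in an unbalanced signature even an inclusion-maximal neutral subspace satisfies $M\subsetneq M^{[\perp]}$. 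The repair, which is how the cited reference sets things up, is to build bijectivity (equivalently $L^\sharp = L^{-1}$) into the definition of $(\calK_-,\calK_+)$-unitarity: bijectivity forces $\abs{E_-}=\abs{E_+}$, and then neutrality plus $\dim M = \tfrac12\dim(\calK_-\oplus\calK_+)$ yields $M = M^{[\perp]}$. As restated in this paper, with unitarity defined by the isometry identity alone, part (a) is literally false for unbalanced star graphs, and your proof inherits exactly that flaw.
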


A special case of coupling conditions at $0$ stems from separating the boundary values of the first derivatives. In order to formulate this, let $Y\subseteq \K^{E_-}\oplus \K^{E_+}$ be a subspace and $U\from (\K^{E_+},(\alpha_e)_{e\in E_+}) \to (\K^{E_-},(\alpha_e)_{e\in E_-})$ be linear. We define
\begin{align*}
  D(A_{Y,U}) & := \Bigl\{u\in D(A_0^*);\; \bigl(u(0\llim),u(0\rlim)\bigr)\in Y,\, \\
    & \qquad\qquad\qquad\qquad \begin{pmatrix} -\alpha_- & 0 \\ 0 & \alpha_+ \end{pmatrix}\begin{pmatrix} u''(0\llim) \\ u''(0\rlim)\end{pmatrix} + \begin{pmatrix} \frac{\beta_-}{2} & 0 \\ 0 & -\frac{\beta_+}{2} \end{pmatrix}\begin{pmatrix} u(0\llim) \\ u(0\rlim)\end{pmatrix} \in Y^\bot,\\
  & \qquad\qquad\qquad\qquad u'(0\llim) = U u'(0\rlim)\Bigr\},\\
  A_{Y,U} u & := -A_0^* u \quad(u\in D(A_{Y,U})).
\end{align*}

\begin{proposition}[{compare \cite[Corollary 3.20 and Theorem 3.23]{MugnoloNojaSeifert2018}}]
\label{prop:A_Y,U}
  Let $Y\subseteq \K^{E_-}\oplus \K^{E_+}$ a subspace and $U\from (\K^{E_+},(\alpha_e)_{e\in E_+}) \to (\K^{E_-},(\alpha_e)_{e\in E_-})$ linear. Then:
  \begin{enumerate}
    \item
      $A_{Y,U}$ generates a unitary $C_0$-group if and only if $U$ is unitary.
    \item
      $A_{Y,U}$ generates a contractive $C_0$-semigroup if and only if $U$ is a contraction.
  \end{enumerate}
\end{proposition}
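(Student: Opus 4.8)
The plan is to recognise $A_{Y,U}$ as an operator of the type $A_L$ and then invoke Proposition \ref{prop:A_L}. For $u\in D(A_0^*)$ I abbreviate the boundary data by
\[x_-:=\bigl(u(0\llim),u'(0\llim),u''(0\llim)\bigr)\in\calG_-,\qquad x_+:=\bigl(u(0\rlim),u'(0\rlim),u''(0\rlim)\bigr)\in\calG_+.\]
The three conditions defining $D(A_{Y,U})$ state exactly that $(x_-,x_+)$ belongs to the graph of a linear relation $L\from\calK_-\to\calK_+$; when $U$ is invertible (in particular in the unitary case) this is the graph of an everywhere-defined operator, whereas for general $U$ I would work with the relational form of the criteria behind Proposition \ref{prop:A_L}, as in \cite{MugnoloNojaSeifert2018}. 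Either way the task reduces to deciding when $L$ is $(\calK_-,\calK_+)$-unitary, respectively $(\calK_-,\calK_+)$-contractive together with $L^\sharp$ being $(\calK_+,\calK_-)$-contractive.

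The key step is to split the Krein forms $\dupa{\cdot}{\cdot}_\pm$ into a ``$u,u''$''-block and a ``$u'$''-block. Writing $x_\pm=(f_\pm,g_\pm,h_\pm)$ and introducing the flux variables $p_-:=-\alpha_- h_-+\tfrac{\beta_-}{2}f_-$ and $q_+:=\alpha_+ h_+-\tfrac{\beta_+}{2}f_+$, a direct computation from the definition of $B_\pm$ gives
\[\dupa{x_-}{x_-}_- = -2\operatorname{Re}\scpr{p_-}{f_-}-\scpr{\alpha_- g_-}{g_-},\qquad \dupa{x_+}{x_+}_+ = 2\operatorname{Re}\scpr{q_+}{f_+}-\scpr{\alpha_+ g_+}{g_+}.\]
The shift by $\tfrac{\beta_\pm}{2}$ is present precisely to absorb the $\beta_\pm$-terms of the Krein form into $p_-$ and $q_+$. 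The first two conditions defining $A_{Y,U}$ read $(f_-,f_+)\in Y$ and $(p_-,q_+)\in Y^\bot$, so the $Y$/$Y^\bot$ duality yields $\scpr{f_-}{p_-}+\scpr{f_+}{q_+}=0$; taking real parts, the two flux contributions cancel in the difference, leaving
\[\dupa{x_+}{x_+}_+-\dupa{x_-}{x_-}_- = \scpr{\alpha_- g_-}{g_-}-\scpr{\alpha_+ g_+}{g_+}.\]

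Inserting the third condition $g_-=Ug_+$ turns the right-hand side into $\scpr{\alpha_- Ug_+}{Ug_+}-\scpr{\alpha_+ g_+}{g_+}$, i.e.\ the difference of the squared norms of $g_+$ in the weighted inner products $(\alpha_e)_{e\in E_\pm}$. Polarising, the isometry $\dupa{x_+}{x_+}_+=\dupa{x_-}{x_-}_-$ for all admissible data is equivalent to $U$ being isometric for these weighted inner products, while — the $Y$/$Y^\bot$-block being Lagrangian, where the count $\dim Y+\dim Y^\bot=\abs{E_-}+\abs{E_+}$ enters — $L$ is bijective exactly when $U$ is; hence $L$ is $(\calK_-,\calK_+)$-unitary if and only if $U$ is unitary, which is (a). For (b), the same identity shows that $L$ is $(\calK_-,\calK_+)$-contractive if and only if $\norm{Ug_+}\le\norm{g_+}$ for all $g_+$ in the weighted norm, i.e.\ $U$ is a contraction; since the ``$u,u''$''-block is neutral from both sides, the parallel computation for $L^\sharp$ collapses to the weighted adjoint of $U$ being a contraction, which is equivalent, and (b) follows.

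The step I expect to be the main obstacle is the bookkeeping that makes the ``$u,u''$''-block drop out: I must verify that the flux variables $p_-,q_+$ arising from the $\tfrac{\beta_\pm}{2}$-shift are exactly the quantities appearing in the second coupling condition, so that $Y\perp Y^\bot$ annihilates their contribution, and — in the contractive case — that this neutrality persists under the $(\calK_-,\calK_+)$-adjoint, i.e.\ that the conditions on $Y$ describe a hyper-maximal neutral (Lagrangian) subspace and not merely a neutral one. Handling the possible multivaluedness of $L$ when $U$ is not invertible, through the relational version of Proposition \ref{prop:A_L}, is the second point that will need care.
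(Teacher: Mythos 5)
First, a point of calibration: the paper does not prove this proposition at all --- as the bracketed attribution indicates, it is imported wholesale from \cite{MugnoloNojaSeifert2018} (Corollary 3.20 and Theorem 3.23 there), so your attempt can only be compared with the argument of that reference. Your computational core is correct and is in fact the mechanism behind those results: writing $x_\pm=(f_\pm,g_\pm,h_\pm)$, a direct computation with $B_\pm$ gives $\dupa{x_-}{x_-}_-=-2\operatorname{Re}\scpr{p_-}{f_-}-\scpr{\alpha_-g_-}{g_-}$ and $\dupa{x_+}{x_+}_+=2\operatorname{Re}\scpr{q_+}{f_+}-\scpr{\alpha_+g_+}{g_+}$; the vectors $(f_-,f_+)$ and $(p_-,q_+)$ are precisely the ones that the definition of $D(A_{Y,U})$ constrains to lie in $Y$ and $Y^\bot$; hence on admissible boundary data $\dupa{x_+}{x_+}_+-\dupa{x_-}{x_-}_-=\norm{Ug_+}_{\alpha_-}^2-\norm{g_+}_{\alpha_+}^2$, which correctly converts the Krein-geometric conditions into the statement that $U$ is an isometry, respectively a contraction, between the weighted spaces.

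The genuine gap is in the reduction to Proposition \ref{prop:A_L}, which is stated for \emph{operators} $L\from\calK_-\to\calK_+$ only. The three conditions defining $D(A_{Y,U})$ describe a linear relation that is in general neither single-valued nor everywhere defined, and --- contrary to your claim --- invertibility of $U$ does not repair this: the obstruction sits in $Y$, not in $U$. Concretely, take the case the paper actually uses afterwards, $Y=\lin(\1)$ with $\abs{E_+}\geq 2$ and $U$ unitary, and put $x_-=0$. Then $(0,f_+)\in Y$ forces $f_+=0$, and $Ug_+=g_-=0$ forces $g_+=0$, but the second coupling condition only requires $(0,q_+)\in Y^\bot$, i.e.\ $\sum_{e\in E_+}q_{+,e}=0$, which admits nonzero $q_+$ and hence a nonzero admissible $x_+=(0,0,\alpha_+^{-1}q_+)$ paired with $x_-=0$; symmetrically, for $\abs{E_-}\geq 2$ any $x_-$ whose $f_-$-part is non-constant has no partner $x_+$ whatsoever. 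So $M_{Y,U}$ is not the graph of any operator, there is no $L$ with $A_{Y,U}=A_L$, your case split ``$U$ invertible $\Rightarrow$ operator version; general $U$ $\Rightarrow$ relational version'' collapses, and the argument must be run in the linear-relation (boundary subspace) version of the generation theorem in \emph{all} cases --- for part (a) as much as for part (b). That version exists in \cite{SchubertSeifertVoigtWaurick2015, MugnoloNojaSeifert2018} but not in the present paper, so strictly within the paper's toolkit the proof cannot be completed. Once you do pass to that framework, your cancellation identity together with the dimension count $\dim Y+\dim Y^\bot=\abs{E_-}+\abs{E_+}$ (which is what yields maximality, rather than ``bijectivity of $L$'') gives both assertions; this is essentially how the cited reference argues.
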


\section{The KdV equation on metric star graphs}
\label{sec:KdV_graphs}

As in Section \ref{sec:Airy}, we consider a metric star graph with halflines attached at the vertex $0$. 
Let $E=E_-\cup E_+$ be finite, where $e\in E_-$ is an edge parametrized by the interval $(-\infty,0)$, and $e\in E_+$ is an edge parametrized $(0,\infty)$.
For $e\in E$ let $\alpha_e,\beta_e,\gamma_e\in \R$ with $\alpha_e>0$ and $\gamma_e\neq 0$. 
 
In this section we consider the KdV equation on the metric star graph, i.e.
\begin{equation}
\label{eq:KdV_star_graph}
  \partial_t u_e = -\alpha_e \partial_x^3 u_e + \beta_e \partial_x u_e + \gamma_e u_e\partial_x u_e \quad\text{on $\R\times \Omega_e$} \quad(e\in E),
\end{equation}
where $\Omega_E :=(-\infty,0)$ for $e\in E_-$ and $\Omega_e:=(0,\infty)$ for $e\in E_+$.

\begin{definition}
  We say that $u = (u_e)_{e\in E}$ is a \emph{travelling wave} for the KdV equation \eqref{eq:KdV_star_graph} on the star graph provided for $e\in E$ there exist $\varphi_e\in C^3(\R)$ and $c_e\in\R\setminus\{0\}$ such that $u = (u_e)_{e\in E}$ with
  $u_e\from \R\times \Omega_e\to\R$ defined by $u_e(t,x) := \varphi_e(x-c_e t)$ ($t\in\R$, $x\in\Omega_e$) solves \eqref{eq:KdV_star_graph}.
  Moreover, if for all $e\in E$ we have that $\varphi_e$ admits at most one local extremum and the limits $\lim_{y\to \pm\infty} \varphi_e(y)$ exist then $u = (u_e)$ determined by $(\varphi_e)$ is called \emph{solitary wave}.
\end{definition}

We now combine the results reviewed in Sections \ref{sec:KdV_R} and \ref{sec:Airy}, i.e.\ we look for solitary waves for \eqref{eq:KdV_star_graph} satisfying suitable coupling conditions at the vertex $0$.
According to Theorem \ref{thm:KdV_R_solitary_wave}, for $e\in E$ let $\varphi_e\in C^3(\R)$ be defined by
\[\varphi_e(y):=-\frac{3(\beta_e+c_e)}{\gamma_e}\sech^2\Bigl(\frac{\sqrt{\frac{\beta_e+c_e}{\alpha_e}}}{2}(y-y_{0,e})\Bigr)\qquad(y\in\R),\]
where $c_e\in\R$ and $y_{0,e}\in\R$.

For $u\in \bigoplus_{e\in E_-} H^3(-\infty,0) \oplus \bigoplus_{e\in E_+} H^3(0,\infty)$ define
\[u^{(k)}(0\llim):=\bigl(u_e^{(k)}(0\llim)\bigr)_{e\in E_-}, \quad u^{(k)}(0\rlim):=\bigl(u_e^{(k)}(0\rlim)\bigr)_{e\in E_+} \quad(k\in\set{0,1,2}).\]
As above, let us denote by $\alpha_\pm$ and $\beta_\pm$ the multiplication operators on $\K^{E_\pm}$ defined by
\[\alpha_\pm x := (\alpha_e x_e)_{e\in E_\pm},\quad \beta_\pm x:= (\beta_e x_e)_{e\in E_\pm} \quad(x\in \K^{E_\pm}).\]
Note that if we have a travelling wave for \eqref{eq:KdV_star_graph} with $\varphi_e$ as above, then 
\[u(t,\cdot) = (u_e(t,\cdot))_{e\in E} \in \bigoplus_{e\in E_-} H^3(-\infty,0) \oplus \bigoplus_{e\in E_+} H^3(0,\infty) \quad(t\in\R).\] 

We impose coupling conditions in the spirit of Proposition \ref{prop:A_Y,U}. Specifically, we first consider continuity conditions at the vertex $0$, which can be written as $Y:=\lin(\1)$ being the one-dimensional vector space of vectors having equal entries.

\begin{theorem}
  Let $Y:=\lin(\1) \subseteq \K^{E_-}\oplus \K^{E_+}$, and $U\from(\K^{E_+},(\alpha_e)_{e\in E_+}) \to (\K^{E_-},(\alpha_e)_{e\in E_-})$ contractive.
  Consider \eqref{eq:KdV_star_graph} with coupling conditions
  \begin{align*}
    \bigl(u(t,0\llim),u(t,0\rlim)\bigr) & \in Y, \\
    \begin{pmatrix} -\alpha_- & 0 \\ 0 & \alpha_+ \end{pmatrix}\begin{pmatrix} \partial_x^2 u(t,0\llim) \\ \partial_x^2 u(t,0\rlim)\end{pmatrix} + \begin{pmatrix} \frac{\beta_-}{2} & 0 \\ 0 & -\frac{\beta_+}{2} \end{pmatrix}\begin{pmatrix} u(t,0\llim) \\ u(t,0\rlim)\end{pmatrix} & \in Y^\bot,\\
    \partial_x u(t,0\llim) & = U \partial_x u(t,0\rlim)
  \end{align*}
  for all $t\in\R$.
  Then \eqref{eq:KdV_star_graph} admits a solitary wave if and only if
  \[U\bigl(\frac{1}{c_e}\bigr)_{e\in E_+} = \bigl(\frac{1}{c_e}\bigr)_{e\in E_-}\]
  and for all $e,\tilde{e}\in E$ we have
  \begin{align*}
    \beta_e+c_e & > 0,\\
    \frac{y_{0,e}}{c_e} & = \frac{y_{0,\tilde{e}}}{c_{\tilde{e}}},\\
    \sqrt{\frac{\beta_e+c_e}{\alpha_e}} c_e & = \sqrt{\frac{\beta_{\tilde{e}}+c_{\tilde{e}}}{\alpha_{\tilde{e}}}} c_{\tilde{e}},\\
    \frac{(\beta_e+c_e)}{\gamma_e} & = \frac{(\beta_{\tilde{e}}+c_{\tilde{e}})}{\gamma_{\tilde{e}}},\\
    \sum_{{e}\in E_-}  \frac{\alpha_{{e}}}{c_{{e}}^2} - \sum_{{e}\in E_+}  \frac{\alpha_{{e}}}{c_{{e}}^2} & = 0 ,\\
    \sum_{e\in E_-} \beta_e - \sum_{e\in E_+} \beta_e & = 0.
  \end{align*}
  The solitary wave is then given by
  \[\varphi_e(y):=-\frac{3(\beta_e+c_e)}{\gamma_e}\sech^2\Bigl(\frac{\sqrt{\frac{\beta_e+c_e}{\alpha_e}}}{2}(y-y_{0,e})\Bigr)\qquad(y\in\R, e\in E).\]
\end{theorem}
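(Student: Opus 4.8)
The plan is to reduce the graph problem to the single-edge analysis of Section~\ref{sec:KdV_R} and then read off the three coupling conditions one at a time. By Theorem~\ref{thm:KdV_R_solitary_wave} and the phase-plane classification preceding it, a non-constant solitary-wave profile on edge $e$ that decays to the rest state exists precisely when $\beta_e+c_e>0$, and it is then forced to be the stated $\sech^2$ profile $\varphi_e$; this already accounts for the requirement $\beta_e+c_e>0$ for every $e\in E$. Writing $w(t):=u_e(t,0)$ for the boundary value at the vertex, which becomes independent of $e$ once continuity holds, I will exploit the elementary travelling-wave identity $\partial_t u_e=-c_e\,\partial_x u_e$. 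Evaluated at the vertex and differentiated once more this gives the trace identities
\[\partial_x u_e(t,0)=-\tfrac{1}{c_e}\,w'(t),\qquad \partial_x^2 u_e(t,0)=\tfrac{1}{c_e^2}\,w''(t),\]
which are the device that collapses each coupling condition into an algebraic relation.

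First I treat continuity. Since $Y=\lin(\1)$, membership $(u(t,0\llim),u(t,0\rlim))\in Y$ says that the boundary profile
\[w_e(t):=\varphi_e(-c_et)=-\frac{3(\beta_e+c_e)}{\gamma_e}\sech^2\Bigl(\tfrac12\sqrt{\tfrac{\beta_e+c_e}{\alpha_e}}\,(c_et+y_{0,e})\Bigr)\]
is independent of $e\in E$. Comparing peak heights gives $\tfrac{\beta_e+c_e}{\gamma_e}$ constant; comparing peak locations $t=-y_{0,e}/c_e$ gives $\tfrac{y_{0,e}}{c_e}$ constant; and, using that $\sech^2$ is even and strictly decreasing on $[0,\infty)$, comparing the widths gives $\sqrt{\tfrac{\beta_e+c_e}{\alpha_e}}\,c_e$ constant. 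Next, the first trace identity turns the derivative coupling $\partial_x u(t,0\llim)=U\partial_x u(t,0\rlim)$ into $-w'(t)\,(1/c_e)_{e\in E_-}=-w'(t)\,U(1/c_e)_{e\in E_+}$; since the profile is non-trivial, $w'\not\equiv 0$, and cancelling it yields exactly $U(1/c_e)_{e\in E_+}=(1/c_e)_{e\in E_-}$.

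It remains to handle the second-derivative condition. Using the second trace identity, the $e$-th entry of the vector appearing there equals $-\tfrac{\alpha_e}{c_e^2}w''(t)+\tfrac{\beta_e}{2}w(t)$ for $e\in E_-$ and $+\tfrac{\alpha_e}{c_e^2}w''(t)-\tfrac{\beta_e}{2}w(t)$ for $e\in E_+$, so membership in $Y^\bot=\{v:\sum_{e\in E}v_e=0\}$ reads
\[\Bigl(\sum_{e\in E_+}\tfrac{\alpha_e}{c_e^2}-\sum_{e\in E_-}\tfrac{\alpha_e}{c_e^2}\Bigr)w''(t)+\tfrac12\Bigl(\sum_{e\in E_-}\beta_e-\sum_{e\in E_+}\beta_e\Bigr)w(t)=0.\]
Because $\sech^2$ and its second derivative are linearly independent (one computes $(\sech^2)''=4\sech^2-6\sech^4$), the functions $w$ and $w''$ are linearly independent, so both coefficients must vanish, giving the remaining two balance conditions on $\sum\alpha_e/c_e^2$ and $\sum\beta_e$. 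Running this computation backwards establishes sufficiency: the listed conditions force the common $w$ to exist and make all three coupling relations hold, so the $\sech^2$ profiles assemble into a solitary wave on the graph.

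The part needing the most care is the necessity direction. The delicate points are (i) justifying that each edge profile must be the decaying $\sech^2$ wave rather than a periodic, unbounded, or non-decaying homoclinic orbit, which rests on the classification of Section~\ref{sec:KdV_R} together with the fact that continuity forces the edge profiles to share a common limit at $\pm\infty$, and (ii) the sign in the frequency relation: evenness of $\sech^2$ only forces $\lvert\sqrt{(\beta_e+c_e)/\alpha_e}\,c_e\rvert$ to be constant, so one must use the simultaneous matching of peak heights and peak positions to pin the profiles down to the signed equality stated. The condition $w'\not\equiv 0$ and the linear independence of $\{w,w''\}$ are the technical engine converting each ``equality for all $t$'' into its finite set of algebraic constraints.
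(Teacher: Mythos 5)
Your overall strategy is the same as the paper's: restrict to each edge, invoke Theorem \ref{thm:KdV_R_solitary_wave} to force the $\sech^2$ profile, and convert the three coupling conditions into algebraic identities through the common boundary trace $w(t)=u_e(t,0)$. In the necessity direction you are in fact more careful than the paper, whose proof essentially says only that ``inspecting the coupling conditions and comparing the coefficients yield the conditions'': your trace identities $\partial_x u_e(t,0)=-\tfrac{1}{c_e}w'(t)$ and $\partial_x^2 u_e(t,0)=\tfrac{1}{c_e^2}w''(t)$, the cancellation of $w'\not\equiv 0$ in the $U$-condition, and above all the linear independence of $w$ and $w''$ (via $(\sech^2)''=4\sech^2-6\sech^4$), which is exactly what is needed to split the single $Y^\bot$ identity into the two separate balance conditions, are correct and fill in steps left implicit in the published argument. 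The sufficiency direction, run backwards from the listed conditions, is also sound.

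There is, however, one genuine gap, at the very point you flagged: the signed relation $\sqrt{(\beta_e+c_e)/\alpha_e}\,c_e=\sqrt{(\beta_{\tilde e}+c_{\tilde e})/\alpha_{\tilde e}}\,c_{\tilde e}$. Your proposed repair---that simultaneous matching of peak heights and peak positions pins down the sign---cannot work. Equality of the boundary profiles $w_e$ determines the height $-3(\beta_e+c_e)/\gamma_e$, the peak time $-y_{0,e}/c_e$ and the frequency $\tfrac12\sqrt{(\beta_e+c_e)/\alpha_e}\,\abs{c_e}$, and nothing else: since $\sech^2$ is even, the sign of $c_e$ is invisible in $w_e$; the first-derivative coupling gives exactly the displayed condition on $U$, and the second-derivative coupling sees only $c_e^{2}$, so neither forces the speeds to share a common sign. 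Indeed the signed relation is not a necessary condition at all: take $E_-=\{1\}$, $E_+=\{2,3\}$, $c=(1,3,-2)$, $\alpha=(1,\tfrac92,2)$, $\beta=(0,-\tfrac52,\tfrac52)$, $\gamma=(1,\tfrac12,\tfrac12)$, $y_{0,e}=0$, and $U(x_2,x_3):=\tfrac32 x_2-x_3$. Then $U$ is contractive for the weighted norms, $U(\tfrac13,-\tfrac12)=1=1/c_1$, all $\beta_e+c_e>0$, every boundary profile equals $w(t)=-3\sech^2(t/2)$, and both balance conditions hold ($1=\tfrac12+\tfrac12$ and $0=-\tfrac52+\tfrac52$), so all three coupling conditions are satisfied and a solitary wave exists; yet $\sqrt{(\beta_e+c_e)/\alpha_e}\,c_e$ takes the values $1,1,-1$ on the three edges. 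So the necessity argument can only ever produce the unsigned version of the third condition; as stated, with the signed equality, the ``only if'' implication is false, and the paper's own one-line proof glosses over exactly the same point. Your proof becomes complete if the third condition is read with $\abs{c_e}$ in place of $c_e$, or equivalently under the additional hypothesis that all wave speeds have the same sign.
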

Note that the condition
\[U\bigl(\frac{1}{c_e}\bigr)_{e\in E_+} = \bigl(\frac{1}{c_e}\bigr)_{e\in E_-}\]
fits well to the fifth assumption on the coefficients, since
\[\norm{U\bigl(\frac{1}{c_e}\bigr)_{e\in E_+}}_{\alpha_-}^2 = \norm{\bigl(\frac{1}{c_e}\bigr)_{e\in E_-}}_{\alpha_-}^2 = \sum_{e\in E_-} \abs{\frac{1}{c_e}}^2 \alpha_e = \sum_{e\in E_+} \abs{\frac{1}{c_{e}}}^2 \alpha_{e} = \norm{\bigl(\frac{1}{c_e}\bigr)_{e\in E_+}}_{\alpha_+}^2.\]
So $U$ can indeed be unitary or contractive, and preserves the weighted norm for inverse wave speeds.

\begin{proof} 
  $\Longrightarrow$:
  A solitary wave on the metric star graph is also a solitary wave for each edge $e\in E$ and is thus given by
  \[\varphi_e(y):=-\frac{3(\beta_e+c_e)}{\gamma_e}\sech^2\Bigl(\frac{\sqrt{\frac{\beta_e+c_e}{\alpha_e}}}{2}(y-y_{0,e})\Bigr)\qquad(y\in\R, e\in E).\]
  Using the continuity condition, we obtain that there exist $\varphi\in C^3(\R)$ and $k_e\in\R$ for all $e\in E$ such that
  \[\varphi_e(t) = \varphi(k_e t) \quad(t\in\R).\] 
  We easily see from the continuity condition that $k_e c_e = k_{\tilde{e}} c_{\tilde{e}}$ for all $e,\tilde{e}\in E$. 
  Inspecting the coupling conditions and comparing the coefficients yield the conditions.
  
  $\Longleftarrow$:
  By the first assumption on the coefficients and Theorem \ref{thm:KdV_R_solitary_wave} we obtain solitary waves described by $\varphi_e$ on each edge $e\in E$.
  It remains to show that these $\varphi_e$ can be coupled at the vertex is a way that fits to the coupling conditions.
  
  Exploiting the continuity condition, we need to satisfy
  \[\varphi_e(-c_et) = \varphi_{\tilde{e}}(-c_{\tilde{e}}t) \quad(e,\tilde{e}\in E, t\in\R).\]
  Put differently, there needs to be $\varphi\in C^3(\R)$ such that 
  \[\varphi_e(t) = \varphi(k_e t) \quad(e\in E, t\in\R)\]
  for some $k_e\in \R$. In particular $k_e c_e = k_{\tilde{e}} c_{\tilde{e}}$ for all $e,\tilde{e}\in E$.
  Since by Theorem \ref{thm:KdV_R_solitary_wave} we know how $\varphi_e$ looks like, we thus need
  \[-\frac{3(\beta_e+c_e)}{\gamma_e}\sech^2\Bigl(\frac{\sqrt{\frac{\beta_e+c_e}{\alpha_e}}}{2}(-c_et-y_{0,e})\Bigr) = -\frac{3(\beta_{\tilde{e}}+c_{\tilde{e}})}{\gamma_{\tilde{e}}}\sech^2\Bigl(\frac{\sqrt{\frac{\beta_{\tilde{e}}+c_{\tilde{e}}}{\alpha_{\tilde{e}}}}}{2}(-c_{\tilde{e}}t -y_{0,{\tilde{e}}})\Bigr) \quad (e,\tilde{e}\in E, t\in\R).\]
  The second, third and forth assumption on the coefficients now yield that this is indeed true. Hence, the first coupling condition can be achieved.
  
  For the second coupling condition, we note that for $e, \tilde{e}\in E$ we have  
  \[u_{\tilde{e}}(t,x) = \varphi_{\tilde{e}}(x-c_{\tilde{e}}t) = \varphi_e\bigl(\frac{c_e}{c_{\tilde{e}}}x - c_e t\bigr) \quad(t\in\R, x\in\Omega_{\tilde{e}}).\]
  Hence,
  \[\partial_x^2 u_{\tilde{e}}(t,0) = \bigl(\frac{c_e}{c_{\tilde{e}}}\bigr)^2 \varphi_e''\bigl( - c_e t\bigr) = \frac{1}{c_{\tilde{e}}^2} c_e^2\varphi_e''(-c_et) \quad(t\in\R).\]
  Therefore, the second coupling condition requires
  \[\sum_{\tilde{e}\in E_-} -\alpha_{\tilde{e}} \frac{1}{c_{\tilde{e}}^2} c_e^2\varphi_e''(-c_et) + \sum_{\tilde{e}\in E_+} \alpha_e \frac{1}{c_{\tilde{e}}^2} c_e^2\varphi_e''(-c_et) + \Bigl(\sum_{e\in E_-} \frac{\beta_e}{2}  \sum_{e\in E_+} -\frac{\beta_e}{2}\Bigr) \varphi_e(-c_e t) = 0,\]
  or put differently
  \[\Bigl(\sum_{\tilde{e}\in E_-} -\alpha_{\tilde{e}} \frac{1}{c_{\tilde{e}}^2} + \sum_{\tilde{e}\in E_+} \alpha_{\tilde{e}} \frac{1}{c_{\tilde{e}}^2}\Bigr) c_e^2\varphi_e''(-c_et) + \Bigl(\sum_{e\in E_-} \frac{\beta_e}{2} + \sum_{e\in E_+} -\frac{\beta_e}{2}\Bigr) \varphi_e(-c_e t) = 0 \quad(t\in\R).\]
  Now, the fifth and sixth assumption on the coefficients yield the assertion.
  
  Let us turn to the third coupling condition. We have
  \[\partial_x u_e(t,0{ {\scriptstyle{\pm}}}) = \varphi_e(-c_e t) = k_e \varphi'(-k_e c_e t) \quad(t\in\R, e\in E_\pm).\]
  Hence, there exists $A\in C^2(\R)$ such that
  \[\partial_x u_e(t,0{ {\scriptstyle{\pm}}}) = A(t) \frac{1}{c_e} \quad(e\in E_\pm).\]
  By assumption on $U$ we have
  \[U\partial_x u(t,0\rlim) = A(t) U \bigl(\frac{1}{c_e}\bigr)_{e\in E_+} = A(t) \bigl(\frac{1}{c_e}\bigr)_{e\in E_-} = \partial_x u(t,0\llim). \qedhere\]  
\end{proof}

As a direct consequence we obtain that if the parameter set on any edge is related by scaling to the parameter set of any other edge, then solitary waves require equality of the parameter sets over the edges.

\begin{corollary}
	For coupling conditions as in the previous theorem, if for all $e_1,e_2\in E$ there exists $p_{21}\in\R$ such that
	\[(\alpha_{e_2}, \beta_{e_2}, \gamma_{e_2}, c_{e_2}) = p_{21}(\alpha_{e_1}, \beta_{e_1},\gamma_{e_1}, c_{e_1})\]
	and if there exists a solitary wave, then 
	\[(\alpha_{e_2}, \beta_{e_2}, \gamma_{e_2}, c_{e_2}) = (\alpha_{e_1}, \beta_{e_1},\gamma_{e_1}, c_{e_1})\]
	for all $e_1,e_2\in E$.
\end{corollary}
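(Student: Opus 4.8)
The plan is to fix two arbitrary edges $e_1,e_2\in E$, write $p:=p_{21}$ for the scaling factor relating their parameter vectors, and show that the third compatibility condition from the preceding theorem forces $p=1$. Since $e_1,e_2$ are arbitrary, this immediately yields the claimed equality of all parameter vectors. As a preliminary, I would record the elementary constraint $p>0$: indeed $\alpha_{e_2}=p\,\alpha_{e_1}$ with both $\alpha_{e_1}>0$ and $\alpha_{e_2}>0$, so $p>0$ and in particular all the square roots appearing below are well defined.

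The key observation is that the quantity $\frac{\beta_e+c_e}{\alpha_e}$ is invariant under the simultaneous scaling of $(\alpha_e,\beta_e,c_e)$: substituting $\beta_{e_2}=p\,\beta_{e_1}$, $c_{e_2}=p\,c_{e_1}$ and $\alpha_{e_2}=p\,\alpha_{e_1}$ gives
\[\frac{\beta_{e_2}+c_{e_2}}{\alpha_{e_2}} = \frac{p(\beta_{e_1}+c_{e_1})}{p\,\alpha_{e_1}} = \frac{\beta_{e_1}+c_{e_1}}{\alpha_{e_1}}.\]
Hence the factors $\sqrt{\frac{\beta_{e_1}+c_{e_1}}{\alpha_{e_1}}}$ and $\sqrt{\frac{\beta_{e_2}+c_{e_2}}{\alpha_{e_2}}}$ coincide, and by the first compatibility condition $\beta_{e_1}+c_{e_1}>0$ this common value is strictly positive. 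I would then plug this into the third compatibility condition
\[\sqrt{\tfrac{\beta_{e_1}+c_{e_1}}{\alpha_{e_1}}}\, c_{e_1} = \sqrt{\tfrac{\beta_{e_2}+c_{e_2}}{\alpha_{e_2}}}\, c_{e_2}\]
and cancel the common positive square root, which leaves $c_{e_1}=c_{e_2}$. Since $c_{e_2}=p\,c_{e_1}$ and $c_{e_1}\neq 0$ by the definition of a travelling wave, this forces $p=1$, and therefore $(\alpha_{e_2},\beta_{e_2},\gamma_{e_2},c_{e_2})=(\alpha_{e_1},\beta_{e_1},\gamma_{e_1},c_{e_1})$.

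I do not expect any genuine obstacle here: the entire content is the remark that, among the compatibility conditions, both the combination $\sqrt{(\beta_e+c_e)/\alpha_e}$ and the ratio $(\beta_e+c_e)/\gamma_e$ are scale-invariant under the hypothesised scaling, so they carry no information, and it is precisely the lone factor $c_e$ in the third condition that breaks the scaling symmetry and pins down $p=1$. The only points requiring care are the nondegeneracy facts $\beta_{e_1}+c_{e_1}>0$, guaranteeing the square root is nonzero, and $c_{e_1}\neq 0$, built into the definition of a travelling wave; both are available directly from the hypotheses.
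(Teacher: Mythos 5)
Your proof is correct and follows exactly the route the paper intends: the corollary is stated there as a direct consequence of the preceding theorem, with no written proof, and your argument supplies precisely that deduction. Observing that $\frac{\beta_e+c_e}{\alpha_e}$ is invariant under the scaling and then cancelling the common positive factor $\sqrt{\frac{\beta_{e_1}+c_{e_1}}{\alpha_{e_1}}}$ in the condition $\sqrt{\frac{\beta_{e_1}+c_{e_1}}{\alpha_{e_1}}}\,c_{e_1}=\sqrt{\frac{\beta_{e_2}+c_{e_2}}{\alpha_{e_2}}}\,c_{e_2}$ to force $c_{e_1}=c_{e_2}$, hence $p_{21}=1$ (using $c_{e_1}\neq 0$ and $\beta_{e_1}+c_{e_1}>0$), is exactly the intended use of the theorem's compatibility conditions.
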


We now fix $(\alpha_e)_{e\in E}$ and $(\beta_e)_{e\in E}$ (the parameters of the linearized part) to be constant, so equal on all edges. Then existence of solitary waves requires the remaining parameters to be constant and the metric graph to be \emph{balanced}, i.e.\ $|E_-| = |E_+|$.

\begin{corollary}
	Let $Y:=\lin(\1) \subseteq \K^{E_-}\oplus \K^{E_+}$, and $U\from(\K^{E_+},(\alpha_e)_{e\in E_+}) \to (\K^{E_-},(\alpha_e)_{e\in E_+-})$ contractive. Let $(\alpha_e)_{e\in E}$ and $(\beta_e)_{e\in E}$ be constant.
	Consider \eqref{eq:KdV_star_graph} with coupling conditions
	\begin{align*}
	\bigl(u(t,0\llim),u(t,0\rlim)\bigr) & \in Y, \\
	\begin{pmatrix} -\alpha_- & 0 \\ 0 & \alpha_+ \end{pmatrix}\begin{pmatrix} \partial_x^2 u(t,0\llim) \\ \partial_x^2 u(t,0\rlim)\end{pmatrix} + \begin{pmatrix} \frac{\beta_-}{2} & 0 \\ 0 & -\frac{\beta_+}{2} \end{pmatrix}\begin{pmatrix} u(t,0\llim) \\ u(t,0\rlim)\end{pmatrix} & \in Y^\bot,\\
	\partial_x u(t,0\llim) & = U \partial_x u(t,0\rlim)
	\end{align*}
	for all $t\in\R$.
	Then \eqref{eq:KdV_star_graph} admits a solitary wave with $c_e\geq -\frac{2}{3}\beta_e$ for all $e\in E$ if and only if and
	$U\bigl(\frac{1}{c_e}\bigr)_{e\in E_-} = \bigl(\frac{1}{c_e}\bigr)_{e\in E_+}$,  $\abs{E_-} = \abs{E_+}$, and $(\gamma_e)_{e\in E}$, $(c_e)_{e\in E}$ and $(y_{0,e})_{e\in E}$ are constant. 
\end{corollary}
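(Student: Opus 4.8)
The plan is to deduce this corollary directly from the preceding theorem by specializing to $\alpha_e\equiv\alpha$ and $\beta_e\equiv\beta$ and reading off the consequences of the six compatibility conditions; the decisive tool is a monotonicity analysis of the map $c\mapsto c\sqrt{\beta+c}$.

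For the direction ``$\Longrightarrow$'', I would assume a solitary wave with $c_e\geq-\frac{2}{3}\beta$ exists and invoke the theorem to obtain all six conditions together with $U(1/c_e)_{e\in E_+}=(1/c_e)_{e\in E_-}$. With $\alpha,\beta$ constant, the third condition reduces (after dividing by $\sqrt{\alpha}$) to $c_e\sqrt{\beta+c_e}=c_{\tilde{e}}\sqrt{\beta+c_{\tilde{e}}}$ for all $e,\tilde{e}\in E$. Setting $g(c):=c\sqrt{\beta+c}$ on $\set{c>-\beta}$ and computing $g'(c)=\frac{2\beta+3c}{2\sqrt{\beta+c}}$, I observe that $g'>0$ precisely for $c>-\frac{2}{3}\beta$; hence on the range permitted by the hypothesis (note that $\beta+c_e>0$ forces $c_e>-\beta$, so each $c_e$ lies in the domain, and $c_e\geq-\frac{2}{3}\beta$ keeps it on the increasing branch) the function $g$ is strictly increasing and therefore injective. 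This forces $(c_e)_{e\in E}$ to be constant, say equal to $c$. Substituting $c_e\equiv c$ back then collapses everything: the second condition yields $(y_{0,e})$ constant, the fourth (using $\beta+c>0$) yields $(\gamma_e)$ constant, the fifth becomes $\frac{\abs{E_-}}{c^2}=\frac{\abs{E_+}}{c^2}$ and hence $\abs{E_-}=\abs{E_+}$ (the sixth being automatic once $\beta$ is constant and the graph is balanced), and the $U$-condition becomes $U\1=\1$, which is the stated identity for inverse speeds.

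For the direction ``$\Longleftarrow$'', I would start from the listed data---$(c_e)\equiv c$, $(\gamma_e)\equiv\gamma$, $(y_{0,e})\equiv y_0$ all constant, $\abs{E_-}=\abs{E_+}$, and $U\1=\1$---and simply verify the six hypotheses of the theorem. The second through fourth conditions hold because the relevant edge-wise ratios are now independent of the edge; the fifth reads $\frac{\abs{E_-}}{c^2}=\frac{\abs{E_+}}{c^2}$ and holds by balancedness; the sixth holds since $\beta$ is constant and $\abs{E_-}=\abs{E_+}$; and the $U$-condition is exactly $U\1=\1$. The first condition $\beta+c>0$ is what is needed to produce the $\sech^2$-profile via Theorem \ref{thm:KdV_R_solitary_wave}; here $c\geq-\frac{2}{3}\beta$ together with $c\neq 0$ already yields $\beta+c>0$ when $\beta\geq 0$, while for $\beta<0$ the existence of a solitary wave presupposes $\beta+c>0$. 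With all hypotheses in force, the theorem delivers the solitary wave with the claimed constant profile.

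I expect the only real subtlety to be the injectivity step. Without the hypothesis $c_e\geq-\frac{2}{3}\beta_e$, the relation $c_e\sqrt{\beta+c_e}=\mathrm{const}$ admits two solutions straddling the critical point $-\frac{2}{3}\beta$, so genuinely distinct wave speeds on different edges would remain compatible with the third condition. Recognizing that the threshold $-\frac{2}{3}\beta$ appearing in the statement is exactly the location of the critical point of $g$---and is therefore precisely what restricts $c$ to the increasing, injective branch---is the heart of the argument; everything else is routine bookkeeping with the constancy relations.
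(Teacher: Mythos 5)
Your proof is correct and follows essentially the same route as the paper's: both directions rest on invoking the preceding theorem and on the injectivity of $c\mapsto c\sqrt{\beta+c}$ on $[-\frac{2}{3}\beta,\infty)$, which you additionally justify by computing the derivative and identifying $-\frac{2}{3}\beta$ as the critical point. The one genuine divergence is how balancedness is obtained: the paper reads $\abs{E_-}=\abs{E_+}$ off the sixth condition $\sum_{e\in E_-}\beta_e-\sum_{e\in E_+}\beta_e=0$, which tacitly requires $\beta\neq 0$, whereas you first establish constancy of $(c_e)_{e\in E}$ and then use the fifth condition $\sum_{e\in E_-}\alpha_e/c_e^2=\sum_{e\in E_+}\alpha_e/c_e^2$, which works for every admissible $\beta$ since $\alpha>0$; your ordering is therefore slightly more robust. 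Your observation that for $\beta<0$ the converse direction needs $\beta+c>0$ as an additional implicit hypothesis is also accurate: the paper's converse silently assumes this when citing Theorem \ref{thm:KdV_R_solitary_wave}, so flagging it is a small improvement rather than a defect of your argument.
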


\begin{proof}
	Assume we have solitary waves with $c_e\geq -\frac{2}{3}\beta_e$ for all $e\in E$.	
	The equality $\sum_{e\in E_-} \beta_e + \sum_{e\in E_+} \beta_e = 0$ and constancy of $(\beta_e)_{e\in E}$ yields $\abs{E_-} = \abs{E_+}$.
	The equality $\sqrt{\frac{\beta_e+c_e}{\alpha_e}} c_e = \sqrt{\frac{\beta_{\tilde{e}}+c_{\tilde{e}}}{\alpha_{\tilde{e}}}} c_{\tilde{e}}$ for $e,\tilde{e}\in E$ and constancy of $(\alpha_e)_{e\in E}$ and $(\beta_e)_{e\in E}$
	yields constancy of $(c_e)_{e\in E}$ since $[-\frac{2}{3}\beta ,\infty)\ni c\mapsto \sqrt{\beta+c} c$ is injective.
	Then $y_{0,e}\frac{1}{c_e} = y_{0,\tilde{e}}\frac{1}{c_{\tilde{e}}}$ for $e,\tilde{e}\in E$ yields constancy of $(y_{0,e})_{e\in E}$, and $
	\frac{(\beta_e+c_e)}{\gamma_e} = \frac{(\beta_{\tilde{e}}+c_{\tilde{e}})}{\gamma_{\tilde{e}}}$ for $e,\tilde{e}\in E$ yields constancy of $(\gamma_e)_{e\in E}$.
	
	To show the converse, if the graph is balanced and $(\gamma_e)_{e\in E}$, $(c_e)_{e\in E}$ and $(y_{0,e})_{e\in E}$ are constant, we easily see from Theorem \ref{thm:KdV_R_solitary_wave} that we then obtain a solitary wave.
\end{proof}

The previous corollary actually states that we can view the graph as a bunch of $\abs{E_-}$ real lines and the solitary wave on the graph is nothing but the same solitary wave on each of this real lines.

\bigskip

If we drop the continuity assumption at the vertex $0$, we obtain somewhat non-trivial solitary waves. The proof is analogous to the one of Theorem \ref{thm:KdV_R_solitary_wave}. We demonstrate this by means of a so-called \emph{$Y$-junction}, i.e.\ a metric star graph with three edges.

\begin{proposition}
	Let $\abs{E_-}=1$, $\abs{E_+}=2$, $a\in\R\setminus\{0\}$, $Y:=\lin\{(1,a,a)\}$, and $U\from(\K^{E_+},(\alpha_e)_{e\in E_+}) \to (\K^{E_-},(\alpha_e)_{e\in E_-})$ contractive. Then \eqref{eq:KdV_star_graph} 
	with coupling conditions
  \begin{align*}
    \bigl(u_-(t,0\llim),u_{+,1}(t,0\rlim),u_{+,2}(t,0\rlim)\bigr) & \in Y, \\
    \begin{pmatrix} -\alpha_- & 0 & 0\\ 0 & \alpha_{+,1} & 0 \\
    0 & 0 & \alpha_{+,2}\end{pmatrix}\begin{pmatrix} \partial_x^2 u_-(t,0\llim) \\ \partial_x^2 u_{+,1}(t,0\rlim)\\\partial_x^2 u_{+,2}(t,0\rlim)\end{pmatrix} + \begin{pmatrix} \frac{\beta_-}{2} & 0 & 0 \\ 0 & -\frac{\beta_{+,1}}{2} & 0 \\ 0 & 0 & -\frac{\beta_{+,2}}{2} \end{pmatrix}\begin{pmatrix} u_-(t,0\llim) \\ u_{+,1}(t,0\rlim)\\ u_{+,2}(t,0\rlim)\end{pmatrix} & \in Y^\bot,\\
    \partial_x u_-(t,0\llim) & = U \begin{pmatrix}\partial_x u_{+,1}(t,0\rlim)\\\\ \partial_x u_{+,2}(t,0\rlim)\end{pmatrix}
  \end{align*}
  for all $t\in\R$	
	admits a solitary wave if and only if
	\[U\begin{pmatrix} 1/c_{+,1}\\1/c_{+2}\end{pmatrix} = \frac{1}{c_{-}}\]
	and for all $j\in\{1,2\}$ we have
	\begin{align*}
	\beta_-+c_-, \; \beta_{+,j}+c_{+,j} & > 0,\\
	\frac{y_{0,c_-}}{c_{-}} & = \frac{y_{0,c_{+,j}}}{c_{+,j}},\\
	\sqrt{\frac{\beta_-+c_-}{\alpha_-}} c_- & = \sqrt{\frac{\beta_{+,j}+c_{+,j}}{\alpha_{+,j}}} c_{+,j},\\
	a\frac{(\beta_-+c_-)}{\gamma_-} & = \frac{(\beta_{+,j}+c_{+,j})}{\gamma_{+,j}},\\
	 \frac{\alpha_{-}}{c_{-}^2} & = a^2\bigl(\frac{\alpha_{+,1} }{c_{+,1}^2}+ \frac{\alpha_{+,2}}{c_{+,2}^2}\Bigr),\\
	\beta_-  & = a^2\bigl(\beta_{+,1} + \beta_{+,2}\Bigr).
	\end{align*}
\end{proposition}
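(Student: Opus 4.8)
The plan is to transcribe the proof of the preceding theorem, the only structural change being that the scaled continuity space $Y=\lin\{(1,a,a)\}$ forces the two outgoing profiles to equal $a$ times the incoming one at the vertex, rather than coinciding with it. In the direction $\Longrightarrow$ I would first observe that a solitary wave on the $Y$-junction restricts to a solitary wave on each edge, so Theorem~\ref{thm:KdV_R_solitary_wave} simultaneously forces $\beta_e+c_e>0$ and identifies each $\varphi_e$ with the displayed $\sech^2$-profile, of amplitude $-3(\beta_e+c_e)/\gamma_e$, inverse width $\tfrac12\sqrt{(\beta_e+c_e)/\alpha_e}$ and centre $y_{0,e}$. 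The remaining work is then to read off the compatibility relations one coupling condition at a time.

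Writing $u_e(t,0)=\varphi_e(-c_et)$, membership in $Y$ is equivalent to $\varphi_{+,j}(-c_{+,j}t)=a\,\varphi_-(-c_-t)$ for all $t$ and $j\in\{1,2\}$, which after the substitution $s=-c_{+,j}t$ reads $\varphi_{+,j}(s)=a\,\varphi_-\bigl(\tfrac{c_-}{c_{+,j}}s\bigr)$ for all $s\in\R$. Matching the three analytic data of the two $\sech^2$-profiles (the prefactor, the coefficient of $s$ inside the argument, and the constant inside the argument, using that $\sech^2$ is even) yields exactly the amplitude relation $a(\beta_-+c_-)/\gamma_-=(\beta_{+,j}+c_{+,j})/\gamma_{+,j}$, the width relation $\sqrt{(\beta_-+c_-)/\alpha_-}\,c_-=\sqrt{(\beta_{+,j}+c_{+,j})/\alpha_{+,j}}\,c_{+,j}$, and, combining the latter two, the phase relation $y_{0,-}/c_-=y_{0,+,j}/c_{+,j}$.

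For the second coupling condition I would differentiate the functional identity $\varphi_{+,j}=a\,\varphi_-\bigl(\tfrac{c_-}{c_{+,j}}\cdot\bigr)$, obtaining at the vertex $\varphi_{+,j}''(-c_{+,j}t)=a\,(c_-/c_{+,j})^2\,\varphi_-''(-c_-t)$ together with $\varphi_{+,j}(-c_{+,j}t)=a\,\varphi_-(-c_-t)$. Inserting these into the $Y^\bot$-condition (i.e.\ pairing the bracketed vector against $(1,a,a)$) collapses it to an identity $P\,\varphi_-''(-c_-t)+Q\,\varphi_-(-c_-t)=0$ valid for all $t$, with $P=-\alpha_-+a^2c_-^2\sum_{j}\alpha_{+,j}/c_{+,j}^2$ and $Q=\tfrac12\bigl(\beta_--a^2\sum_{j}\beta_{+,j}\bigr)$. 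Since $\varphi_-''$ carries a $\sech^4$-term that the pure $\sech^2$ function $\varphi_-$ lacks, $\varphi_-$ and $\varphi_-''$ are linearly independent, which forces $P=Q=0$; these two equations are precisely the fifth and sixth conditions. Finally, differentiating the $Y$-identity once gives $\partial_x u_{+,j}(t,0\rlim)=a\,(c_-/c_{+,j})\,\partial_x u_-(t,0\llim)$, so, setting $A(t):=\partial_x u_-(t,0\llim)$ and cancelling it in the third coupling condition, one is left with the single constraint $U(1/c_{+,1},1/c_{+,2})^\top=\tfrac{1}{ac_-}$, which is consistent with the fifth condition through the weighted isometry $\norm{Ux}_{\alpha_-}=\norm{x}_{\alpha_+}$.

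For the converse I would run every step backwards: conditions two–four reconstruct the functional identity $\varphi_{+,j}=a\,\varphi_-\bigl(\tfrac{c_-}{c_{+,j}}\cdot\bigr)$ and hence the $Y$-membership; conditions five–six turn the $Y^\bot$-identity into $0=0$; the constraint on $U$ reconstructs the derivative matching; and the first condition together with Theorem~\ref{thm:KdV_R_solitary_wave} guarantees that the $\varphi_e$ exist and are genuine solitary profiles, while contractivity of $U$ places the coupling in the admissible class of Proposition~\ref{prop:A_Y,U}. The step I expect to be the real obstacle is the second coupling condition: one must propagate the jump factor $a$ and the speed ratio $c_-/c_{+,j}$ correctly through the second $x$-derivative and then justify the linear independence of $\varphi_-$ and $\varphi_-''$, so that the scalar $Y^\bot$-identity genuinely splits into the two separate conditions five and six rather than into a single combined one. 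Everything else is a faithful transcription of the continuity case with $(1,1,1)$ replaced by $(1,a,a)$.
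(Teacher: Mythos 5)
Your proposal is correct and follows exactly the route the paper intends: the paper's own ``proof'' is a single remark that the argument is analogous to the continuity case, and your transcription with $(1,1,1)$ replaced by $(1,a,a)$ supplies precisely the omitted details. In particular, your justification that the scalar $Y^\bot$-identity genuinely splits into the two separate conditions five and six --- because $\varphi_-''$ is a nontrivial combination of $\sech^2$ and $\sech^4$ and hence not proportional to $\varphi_-$ --- is the right (and needed) argument, which the paper never spells out even in the continuity case.

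There is, however, one substantive discrepancy you should not leave implicit: your derivation of the third coupling condition yields $U(1/c_{+,1},1/c_{+,2})^\top=\frac{1}{a c_-}$, while the proposition as printed claims $U(1/c_{+,1},1/c_{+,2})^\top=\frac{1}{c_-}$. Your constant is the correct one. Indeed, from $\varphi_{+,j}(s)=a\,\varphi_-(c_-s/c_{+,j})$ one gets $\partial_x u_{+,j}(t,0\rlim)=a\,(c_-/c_{+,j})\,\varphi_-'(-c_-t)$ while $\partial_x u_-(t,0\llim)=\varphi_-'(-c_-t)$, so the coupling condition $\partial_x u_-(t,0\llim)=U\bigl(\partial_x u_{+,1}(t,0\rlim),\partial_x u_{+,2}(t,0\rlim)\bigr)^\top$ reduces to $a c_-\,U(1/c_{+,1},1/c_{+,2})^\top=1$. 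Moreover, only your version is compatible with the fifth condition $\alpha_-/c_-^2=a^2\bigl(\alpha_{+,1}/c_{+,1}^2+\alpha_{+,2}/c_{+,2}^2\bigr)$ together with contractivity of $U$: it makes $U$ act isometrically on $(1/c_{+,1},1/c_{+,2})$ in the weighted norms, exactly as in the paper's remark following the continuity theorem, whereas the printed version contradicts the derived identity whenever $a\neq 1$. So the printed statement contains an erratum (a missing factor $1/a$), and your argument in fact establishes the corrected statement; you should flag this explicitly rather than let your formula silently disagree with the claim being proved.
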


\begin{proof}
    The proof is analogous to the proof of Theorem \ref{thm:KdV_R_solitary_wave}.
\end{proof}

%

\section*{Acknowledgements}

C.S.\ is grateful to Mitja R\"oder for useful discussions on the topic.

\bibliographystyle{alpha}
\bibliography{bibfile}{}

%
%
%

\end{document}